\documentclass[a4paper, reqno, 12pt]{amsart}

\usepackage[usenames,dvipsnames]{color}
\usepackage{amsthm,amsfonts,amssymb,amsmath,amsxtra}
\usepackage[all]{xy}
\SelectTips{cm}{}
\usepackage{xr-hyper}
\usepackage[colorlinks=
   citecolor=Black,
   linkcolor=Red,
   urlcolor=Blue]{hyperref}
\usepackage{verbatim}
\usepackage{enumerate}

\usepackage[margin=1.25in]{geometry}
\usepackage{mathrsfs}

\RequirePackage{xspace}
\RequirePackage{etoolbox}
\RequirePackage{varwidth}
\RequirePackage{enumitem}
\RequirePackage{tensor}
\RequirePackage{mathtools}
\RequirePackage{longtable}
\RequirePackage{multirow}

\setcounter{tocdepth}{1}

\def\ge{\geqslant}
\def\le{\leqslant}
\def\a{\alpha}
\def\b{\beta}

\def\G{\Gamma}
\def\d{\delta}

\def\o{\omega}

\def\s{\sigma}
\def\t{\tau}
\def\th{\theta}
\def\k{\kappa}
\def\l{\lambda}

\def\i{^{-1}}

\def\<{\langle}
\def\>{\rangle}

\newcommand{\bE}{\mathbf E}

\newcommand{\bK}{\mathbf K}

\def\brI{\breve I}
\def\brK{\breve K}

\newcommand{\BA}{\ensuremath{\mathbb {A}}\xspace}

\newcommand{\BF}{\ensuremath{\mathbb {F}}\xspace}
\newcommand{{\BG}}{\ensuremath{\mathbb {G}}\xspace}

\newcommand{\BJ}{\ensuremath{\mathbb {J}}\xspace}
\newcommand{{\BK}}{\ensuremath{\mathbb {K}}\xspace}

\newcommand{\BN}{\ensuremath{\mathbb {N}}\xspace}

\newcommand{\BQ}{\ensuremath{\mathbb {Q}}\xspace}
\newcommand{\BR}{\ensuremath{\mathbb {R}}\xspace}

\newcommand{\BZ}{\ensuremath{\mathbb {Z}}\xspace}

\newcommand{\CO}{\ensuremath{\mathcal {O}}\xspace}

\newcommand{\Ad}{{\mathrm{Ad}}}
\newcommand{\ad}{{\mathrm{ad}}}

\DeclareMathOperator{\Adm}{Adm}

\DeclareMathOperator{\Gal}{Gal}

\DeclareMathOperator{\ord}{ord}

\DeclareMathOperator{\Spec}{Spec}

\newcommand{\ov}{\overline}


\def\tW{\tilde W}


%
\newtheorem{theorem}{Theorem}
\newtheorem{proposition}[theorem]{Proposition}

\newtheorem{corollary}[theorem]{Corollary}

\theoremstyle{definition}

\newtheorem{example}[theorem]{Example}

\newtheorem{remark}[theorem]{Remark}

\numberwithin{equation}{section}
\numberwithin{theorem}{section}


\setitemize[0]{leftmargin=*,itemsep=\the\smallskipamount}
\setenumerate[0]{leftmargin=*,itemsep=\the\smallskipamount}

\renewcommand{\to}{%
   \ifbool{@display}{\longrightarrow}{\rightarrow}%
   }
\let\shortmapsto\mapsto
\renewcommand{\mapsto}{%
   \ifbool{@display}{\longmapsto}{\shortmapsto}%
   }
\newlength{\olen}
\newlength{\ulen}
\newlength{\xlen}
\newcommand{\xra}[2][]{%
   \ifbool{@display}%
      {\settowidth{\olen}{$\overset{#2}{\longrightarrow}$}%
       \settowidth{\ulen}{$\underset{#1}{\longrightarrow}$}%
       \settowidth{\xlen}{$\xrightarrow[#1]{#2}$}%
       \ifdimgreater{\olen}{\xlen}%
          {\underset{#1}{\overset{#2}{\longrightarrow}}}%
          {\ifdimgreater{\ulen}{\xlen}%
             {\underset{#1}{\overset{#2}{\longrightarrow}}}
             {\xrightarrow[#1]{#2}}}}%
      {\xrightarrow[#1]{#2}}
   }
\makeatother
\newcommand{\xyra}[2][]{%
   \settowidth{\xlen}{$\xrightarrow[#1]{#2}$}%
   \ifbool{@display}%
      {\settowidth{\olen}{$\overset{#2}{\longrightarrow}$}%
       \settowidth{\ulen}{$\underset{#1}{\longrightarrow}$}%
       \ifdimgreater{\olen}{\xlen}%
          {\mathrel{\xymatrix@M=.12ex@C=3.2ex{\ar[r]^-{#2}_-{#1} &}}}%
          {\ifdimgreater{\ulen}{\xlen}%
             {\mathrel{\xymatrix@M=.12ex@C=3.2ex{\ar[r]^-{#2}_-{#1} &}}}
             {\mathrel{\xymatrix@M=.12ex@C=\the\xlen{\ar[r]^-{#2}_-{#1} &}}}}}%
      {\mathrel{\xymatrix@M=.12ex@C=\the\xlen{\ar[r]^-{#2}_-{#1} &}}}%
   }
\makeatletter
\newcommand{\xla}[2][]{%
   \ifbool{@display}%
      {\settowidth{\olen}{$\overset{#2}{\longleftarrow}$}%
       \settowidth{\ulen}{$\underset{#1}{\longleftarrow}$}%
       \settowidth{\xlen}{$\xleftarrow[#1]{#2}$}%
       \ifdimgreater{\olen}{\xlen}%
          {\underset{#1}{\overset{#2}{\longleftarrow}}}%
          {\ifdimgreater{\ulen}{\xlen}%
             {\underset{#1}{\overset{#2}{\longleftarrow}}}
             {\xleftarrow[#1]{#2}}}}%
      {\xleftarrow[#1]{#2}}
   }
\newcommand{\isoarrow}{%
   \ifbool{@display}{\overset{\sim}{\longrightarrow}}{\xrightarrow\sim}%
   }

\begin{document}
\author{Xuhua He}
\address{Department of Mathematics, University of Maryland, College Park, MD 20742 and Institute for Advanced Study, Princeton, NJ 08540}
\email{xuhuahe@math.umd.edu}
\thanks{X. H. was partially supported by NSF DMS-1463852 and DMS-1128155 (from IAS). S. N. is supported in part by QYZDB-SSW-SYS007 and NSFC grant (No. 11501547, No. 11621061 and No. 11688101).}
\author{Sian Nie}
\address{Institute of Mathematics, Academy of Mathematics and Systems Science, Chinese Academy of Sciences, 100190, Beijing, China}
\email{niesian@amss.ac.cn}

\title[]{On the $\mu$-ordinary locus of a Shimura variety}
\keywords{}
\subjclass[2010]{11G18, 14G35}
\begin{abstract}
In this paper, we study the $\mu$-ordinary locus of a Shimura variety with parahoric level structure. Under the axioms in \cite{HR}, we show that $\mu$-ordinary locus is a union of certain maximal Ekedahl-Kottwitz-Oort-Rapoport strata introduced in \cite{HR} and we give criteria on the density of the $\mu$-ordinary locus.
\end{abstract}

\maketitle

\section*{Introduction}

\subsection{} Let $\BA_{g, N}$ be the moduli space of principally polarized abelian varieties in characteristic $p$ with fixed dimension $g$ and level-$N$-structure. It is a classical result that the ordinary locus in $\BA_{g, N}$ is open and dense. This result is first proved by Koblitz \cite{Kob} by studying the $p$-rank stratification of $\BA_{g, N}$ via a deformation-theoretic argument. A different proof is obtained by Norman and Oort \cite{NO} by constructing the deformation using Cartier theory which raises the $p$-rank.

It is natural to study the similar problem for arbitrary Shimura varieties. However, the naive generalization does not work as the ordinary locus may be empty. In 1996, Rapoport \cite{Ra} formulated the $\mu$-ordinariness and conjectured the density of the $\mu$-ordinary locus in the hyperspecial case. For Shimura varieties of PEL-type,  this conjecture is proved by Wedhorn \cite{We} using a  deformation-theoretic argument. The result was reproved later by Moonen \cite{Mo} by showing that the $\mu$-ordinary locus coincides with the unique maximal (and open) Ekedahl-Oort stratum. Recently, Wortmann \cite{Wo} extended this result to Shimura varieties of Hodge type with hyperspecial level structure. 


For non-special level structure, it is a difficult problem to construct the deformation explicitly. In fact, the $\mu$-ordinary locus in a Shimura variety with Iwahori level structure is not dense in general, e.g. in the Hilbert-Blumenthal case \cite{St}. In \cite{Ha}, Hartwig gives a necessary and sufficient condition on the density of the ordinary locus in the symplectic PEL Shimura variety with Iwahori level structure.

\subsection{} The main purpose of this paper is to study the $\mu$-ordinary loci $Sh_K^{\mu-\ord}$ of Shimura varieties $Sh_K$ with arbitrary parahoric level structure.

One major novelty of our approach comes from the axioms in \cite{HR}, which provide a systematic way to study various stratifications of Shimura variety. The first step is to relate the $\mu$-ordinary locus with another stratification of $Sh_K$. For Iwahori level structure, we use the Kottwitz-Rapoport stratification. For hyperspecial level structure, we need to use  the Ekedahl-Oort stratification instead (see \cite{Mo}), as the KR stratification (which consists of a single stratum in this case) is insufficient. In the general case, we use the Ekedahl-Kottwitz-Oort-Rapoport stratification $Sh_K=\sqcup_x EKOR_{K, x}$ introduced in \cite{HR} which interpolates between the Kottwitz-Rapoport stratification in the case of Iwahori level structure and the Ekedahl-Oort stratification in the case of hyperspecial level structure.

We prove in Theorem \ref{union} that

\begin{theorem}
Under the axioms in \cite{HR}, the $\mu$-ordinary locus $Sh_K^{\mu-\ord}$ is a union of certain maximal Ekedahl-Kottwitz-Oort-Rapoport strata.
\end{theorem}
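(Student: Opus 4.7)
The plan is to use the axioms in \cite{HR} to translate the statement into a combinatorial claim about the index set ${}^K\Adm(\mu)$ of EKOR strata. Under these axioms we have a decomposition $Sh_K = \bigsqcup_{w \in {}^K\Adm(\mu)} EKOR_{K,w}$ into non-empty smooth locally closed strata of dimension $\ell(w)$, and each $EKOR_{K,w}$ is contained in a unique Newton stratum. This produces a map $\Upsilon \colon {}^K\Adm(\mu) \to B(G,\mu)$ recording the Newton class of each EKOR stratum. Since by definition $Sh_K^{\mu-\ord}$ is the Newton stratum attached to the unique maximal class $[b_{\mu-\ord}] \in B(G,\mu)$, we already have the set-theoretic identification
\[
Sh_K^{\mu-\ord} = \bigsqcup_{w \in \Upsilon^{-1}([b_{\mu-\ord}])} EKOR_{K,w}.
\]
The theorem therefore reduces to showing that every $w \in \Upsilon^{-1}([b_{\mu-\ord}])$ is maximal in ${}^K\Adm(\mu)$ with respect to the partial order $\preceq_{K,\sigma}$ governing EKOR closure relations.

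For the easy direction I would argue that if $w$ is maximal then $EKOR_{K,w}$ is open in $Sh_K$, hence meets, and in fact lies in, an open Newton stratum. Since the axioms guarantee that $Sh_K^{\mu-\ord}$ is non-empty and, by upper semicontinuity of the Newton polygon together with the maximality of $[b_{\mu-\ord}]$, is the unique Newton stratum that is open (on each component where it meets), this forces $\Upsilon(w) = [b_{\mu-\ord}]$. This already exhibits $Sh_K^{\mu-\ord}$ as containing a union of maximal EKOR strata.

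For the essential direction, I would compare dimensions. Under the axioms $\dim EKOR_{K,w} = \ell(w)$, and the $\mu$-ordinary Newton stratum has dimension $\dim Sh_K = \langle 2\rho,\mu\rangle$. Suppose $\Upsilon(w) = [b_{\mu-\ord}]$. The EKOR strata in the closure $\overline{EKOR_{K,w}}$ are indexed by $w' \preceq_{K,\sigma} w$ and have Newton classes $\le [b_{\mu-\ord}]$ by specialization; conversely, any $w'' \succ_{K,\sigma} w$ has $\Upsilon(w'') \ge [b_{\mu-\ord}]$, hence $= [b_{\mu-\ord}]$ by maximality. Iterating, $w$ lies below a maximal $w^*$ with $\Upsilon(w^*) = [b_{\mu-\ord}]$ and $EKOR_{K,w^*}$ open. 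I would now invoke the axiomatic dimension formula for Newton strata together with length-monotonicity of $\Upsilon$ on the fiber $\Upsilon^{-1}([b_{\mu-\ord}])$ to conclude $\ell(w) = \ell(w^*) = \dim Sh_K$, and hence $w = w^*$ is itself maximal; this gives the reverse inclusion and finishes the proof.

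The main obstacle, I expect, is precisely the last combinatorial/geometric input: showing that the whole fiber $\Upsilon^{-1}([b_{\mu-\ord}])$ consists of maximal elements, not merely that it contains them. In the Iwahori case this recovers the classical fact that the maximal KR strata are exactly the translations $t^{\lambda}$ for $\lambda \in W\mu$, each of which is $\mu$-ordinary; in the hyperspecial case it reduces to Moonen's theorem that the $\mu$-ordinary locus is a single Ekedahl--Oort stratum. Interpolating between these via the axiomatic projection $\Adm(\mu) \twoheadrightarrow {}^K\Adm(\mu)$, and matching Newton data across this projection, is what will require real work; everything else is bookkeeping.
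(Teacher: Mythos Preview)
Your proposal has two genuine gaps.

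First, you assume at the outset that ``each $EKOR_{K,w}$ is contained in a unique Newton stratum,'' and use this to define your map $\Upsilon$. This is not one of the axioms in \cite{HR}, and in fact it is false in general (see \S\ref{non-qs} of the paper, where a maximal Newton stratum is shown not to be a union of EKOR strata). For the $\mu$-ordinary class specifically, the containment $EKOR_{K,x}\subset Sh_K^{\mu-\ord}$ for the relevant $x$ is precisely part of what the theorem asserts; you cannot take it as input.

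Second, your ``easy direction'' would show that \emph{every} maximal EKOR stratum lies in $Sh_K^{\mu-\ord}$. But the theorem only claims that $Sh_K^{\mu-\ord}$ is a union of \emph{certain} maximal strata, and indeed Theorem~\ref{density} and Example~\ref{exm} show that in general not all maximal EKOR strata are $\mu$-ordinary (equivalently, $Sh_K^{\mu-\ord}$ need not be dense). Your openness argument breaks down because a maximal EKOR stratum, while open in $Sh_K$, may lie on an irreducible component disjoint from the $\mu$-ordinary locus.

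The paper's proof avoids both issues by working directly with the group-theoretic lift. The key input is Proposition~\ref{leq}: if $\breve I\dot x\breve I\cap[b]\neq\emptyset$ then $\ell(x)\ge\langle\nu(b),2\rho\rangle$, with equality forcing $x$ to be $\sigma$-straight and $\breve I\dot x\breve I\subset[b]$. Applied with $\nu(b)=\mu^\diamond$, this immediately gives $\ell(x)\ge\langle\mu,2\rho\rangle$ whenever $EKOR_{K,x}$ meets $Sh_K^{\mu-\ord}$, forcing $x=t^{\mu'}$ to be of maximal length and $\sigma$-straight; and conversely, $\sigma$-straightness of $t^{\mu'}$ gives the full containment $EKOR_{K,t^{\mu'}}\subset Sh_K^{\mu-\ord}$. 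This length inequality from the theory of straight elements is exactly the ``combinatorial/geometric input'' you flagged as the obstacle, and it replaces your circular dimension argument.
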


This result generalizes \cite{Mo} for Shimura varieties of unramified type with hyperspecial level structure. Compared with \cite{Mo}, a new phenomenon arising in our general case is that there exist more than one maximal Ekedahl-Kottwitz-Oort-Rapoport strata in general. It turns out that this is the only possible obstruction for the density of $\mu$-ordinary locus. We prove in Theorem \ref{density} the following equivalent criteria on the density of $\mu$-ordinary locus.

\begin{theorem}
Under the axioms in \cite{HR}, the following two conditions are equivalent:

(1) The $\mu$-ordinary locus $Sh_K^{\mu-\ord}$ is dense;

(2) The $\mu$-ordinary locus $Sh_K^{\mu-\ord}$ is the union of all maximal Ekedahl-Kottwitz-Oort-Rapoport strata of $Sh_K$.

If moreover, $G$ is quasi-split, then the above conditions are also equivalent to the following equality on the relative Weyl group $W_0$: $$p(W_K) W_0^\s W_\mu=W_0.$$
\end{theorem}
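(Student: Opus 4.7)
The plan is to establish $(1) \Leftrightarrow (2)$ by a formal closure argument using Theorem \ref{union}, and then to translate the quasi-split criterion into a combinatorial equality in the relative Weyl group.

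For $(2) \Rightarrow (1)$: the EKOR stratification is finite, so every stratum lies in the closure of some maximal EKOR stratum, and hence the union of all maximal EKOR strata is automatically dense. Combined with Theorem \ref{union} (which identifies $Sh_K^{\mu-\ord}$ as a union of certain maximal strata), this yields density of $Sh_K^{\mu-\ord}$ under assumption $(2)$. For $(1) \Rightarrow (2)$, I would argue by contradiction: suppose some maximal EKOR stratum $EKOR_{K,y}$ is not contained in $Sh_K^{\mu-\ord}$. By Theorem \ref{union} we may write $Sh_K^{\mu-\ord} = \bigsqcup_{x \in S} EKOR_{K,x}$ with $S$ a set of maximal indices, $y \notin S$. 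The closure $\overline{Sh_K^{\mu-\ord}}$ is the finite union $\bigcup_{x \in S} \overline{EKOR_{K,x}}$, and each such closure is contained in the union of EKOR strata with index $\leq x$ in the closure order. Hence a non-empty intersection $EKOR_{K,y} \cap \overline{Sh_K^{\mu-\ord}}$ would force $y \leq x$ for some $x \in S$, and maximality of $y$ then gives $y = x \in S$, a contradiction. Therefore $EKOR_{K,y}$ is disjoint from $\overline{Sh_K^{\mu-\ord}}$, violating density.

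Now assume $G$ is quasi-split. The strategy for the third equivalence is to set up a combinatorial dictionary and then unwind it. Recall from \cite{HR} that the EKOR strata of $Sh_K$ are indexed by the set ${}^K\Adm(\mu) \subset \wt W$, with closure order prescribed by the axioms. I would establish two facts. First, the maximal elements of ${}^K\Adm(\mu)$ correspond, via the projection $p\colon \wt W \to W_0$, to the double coset space $p(W_K) \backslash W_0 / W_\mu$; concretely these are the $W_K$-orbits of translation elements $t^{w(\mu)}$, $w \in W_0$. Second, using Theorem \ref{union} and the description of the $\mu$-ordinary Newton point as the $\sigma$-averaged dominant representative of $\mu$, the maximal EKOR stratum indexed by $p(W_K) w W_\mu$ lies in $Sh_K^{\mu-\ord}$ if and only if $w \in p(W_K) W_0^\s W_\mu$. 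Granting both facts, condition $(2)$ -- every maximal EKOR stratum is $\mu$-ordinary -- becomes exactly the set equality $p(W_K) W_0^\s W_\mu = W_0$.

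The main obstacle is the combinatorial dictionary in the previous paragraph, and in particular identifying the $\mu$-ordinary condition on Newton points with the $W_0^\s$-orbit structure in $W_0$ modulo $p(W_K)$ on the left and $W_\mu$ on the right. I would attack this in stages: first dispose of the Iwahori case $W_K = \{1\}$, where $p(W_K) = \{1\}$, the criterion collapses to $W_0^\s W_\mu = W_0$, and the EKOR stratification is the Kottwitz--Rapoport stratification parametrized by $\Adm(\mu)$ itself; then pass from Iwahori to parahoric level using the fact that ${}^K\Adm(\mu)$ consists of distinguished ($W_K$-minimal) representatives in $\Adm(\mu)$, checking that both the closure order and the $\mu$-ordinary condition descend compatibly to the quotient. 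The residual technical difficulty will be to verify that descent of the $\sigma$-conjugacy information through $p$ is clean enough to produce precisely the subset $p(W_K) W_0^\s W_\mu$, rather than something strictly larger or smaller.
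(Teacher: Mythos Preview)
Your argument for $(1)\Leftrightarrow(2)$ is correct and matches the paper's proof; the paper phrases $(1)\Rightarrow(2)$ positively (density forces intersection with every maximal stratum, and since Theorem~\ref{union} already says $Sh_K^{\mu\text{-}\mathrm{ord}}$ is a union of maximal strata, it must be all of them), but this is the same as your contrapositive.

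For the quasi-split equivalence your framework is correct, but you have correctly located the gap and your proposed attack does not close it. The paper inserts an intermediate condition: \emph{every $t^{\mu'}$ with $\mu'\in W_0(\mu)$ and $t^{\mu'}\in{}^K\tW$ is $\s$-straight}. This is equivalent to (2) directly from Theorem~\ref{union}, and in the quasi-split case Example~\ref{eg-1}(a) says $t^{\mu'}$ is $\s$-straight iff $\mu'\in W_0^\s(\mu)$. So the question becomes purely combinatorial: does the unique $K$-dominant element of each $p(W_K)$-orbit on $W_0(\mu)$ lie in $W_0^\s(\mu)$? One direction is trivial. The nontrivial direction is exactly your ``residual technical difficulty'': if $\mu'$ is $K$-dominant and $p(W_K)\mu'$ meets $W_0^\s(\mu)$, why must $\mu'$ itself lie in $W_0^\s(\mu)$?

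Your ``Iwahori first, then descend'' plan does not address this. At Iwahori level every $\mu'$ is its own representative and the equivalence is immediate; the obstruction appears only at genuine parahoric level, and there is nothing to descend. The paper's argument is a short Weyl-chamber computation exploiting $\s(K)=K$: write $\mu'=uw(\mu)$ with $u\in p(W_K)$, $w\in W_0^\s$, and adjust $u$ within $p(W_K)$ so that $uw(C)\subset C_{\underline K}$ (the $K$-dominant chamber). Since $\s$ stabilizes $K$ it stabilizes $C_{\underline K}$, so $\s(u)w(C)=\s(u)\s(w)(C)=\s(uw(C))\subset C_{\underline K}$ as well; uniqueness of the Weyl chamber inside $C_{\underline K}$ in a given $p(W_K)$-orbit forces $\s(u)=u$, hence $\mu'=uw(\mu)\in W_0^\s(\mu)$. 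This geometric input is the missing idea in your proposal.
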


Here $p(W_K)$ is the image of the Weyl group $W_K$ of the parahoric subgroup $K$ in $W_0$; $W_0^\s$ is the group of $\s$-fixed elements in $W_0$; $W_\mu \subset W_0$ is the stabilizer of $\mu$.

As a consequence, if the group is residually split or $K$ is a maximal special parahoric subgroup, then the $\mu$-ordinary locus is dense.

\subsection{} We would like to draw attention that the main results above are subject to the axioms in~\cite{HR}. The nonemptiness of various characteristic subsets (e.g. the Newton stratification, the Ekedahl-Oort stratification and the Kottwitz-Rapoport stratification) in the reduction of a general Shimura variety has been an open problem in arithmetic geometry for many years. It is premature at present to expect unconditional results on the inclusion and closure relations (including the density problem) between these characteristic subsets for a general Shimura variety.

We stress (if necessary at all!) that our aim is quite modest. One purpose is to provide some necessary group-theoretic tools that could possibly be used to achieve further progress on the density problem of the $\mu$-ordinary locus of a Shimura variety with arbitrary parahoric level structure, while there are some difficulty to carry through the classical approach using deformation theory in this general situation. The other purpose is to provide an example of how the axiom set in \cite{HR} may be used in the theory of Shimura varieties: to simplify some arguments, to establish results in new cases and to obtain new results.

\subsection{} Let us come to the axioms of \cite{HR}. These axioms are by no means trivial, nor could be checked by a routine procedure. The situation is rather the opposite. First, the axiom set depends on the existence of integral models (e.g. the work of Rapoport-Zink, Kisin, and Kisin-Pappas). Even after the integral model is constructed, there still remains highly nontrivial work to verify that these integral models satisfy the axioms. We refer to \cite[\S 0.3]{HZ} for some detailed explanation.

The current status of the validity of these axioms is as follows. For PEL type Shimura varieties associated to unramified groups of type $A$ and $C$ and to odd ramified unitary groups (note that the reductive group $G$ in those cases are quasi-split), the axioms are verified by the first author and R. Zhou in \cite{HZ}. For Shimura varieties of Hodge type, there is an ongoing work by Zhou \cite{Zhou}, where most of the axioms are verified.

Upon the above progress, we have the following definitive (unconditional) result on the $\mu$-ordinary locus.

\begin{theorem}
For PEL type Shimura varieties associated to unramified groups of type $A$ and $C$ and to odd ramified unitary groups, we have the following results:

\begin{enumerate}
\item The $\mu$-ordinary locus $Sh_K^{\mu-\ord}$ is a union of certain maximal Ekedahl-Kottwitz-Oort-Rapoport strata.

\item The following conditions are equivalent:
\begin{enumerate}
\item The $\mu$-ordinary locus $Sh_K^{\mu-\ord}$ is dense;

\item The $\mu$-ordinary locus $Sh_K^{\mu-\ord}$ is the union of all maximal Ekedahl-Kottwitz-Oort-Rapoport strata of $Sh_K$.

\item The equality $p(W_K) W_0^\s W_\mu=W_0$ holds.
\end{enumerate}

\end{enumerate}
\end{theorem}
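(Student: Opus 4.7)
The plan is that this final theorem is essentially a corollary of the two preceding theorems (Theorem \ref{union} and Theorem \ref{density}) once one has the input that the axioms of \cite{HR} are satisfied in the listed cases. So the proof proposal splits into two tasks: cite the axiom verification, and then check that the hypotheses of the two conditional theorems are met.

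First I would invoke the work of He--Zhou \cite{HZ}, in which the axioms of \cite{HR} are verified for PEL type Shimura varieties attached to unramified groups of type $A$ and $C$ and to odd ramified unitary groups. With the axioms in hand, part (1) follows immediately from Theorem \ref{union} applied to these Shimura varieties: the $\mu$-ordinary locus $Sh_K^{\mu-\ord}$ is a union of certain maximal EKOR strata.

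Next, for part (2), I would apply Theorem \ref{density}. Its first two equivalent conditions (the density of $Sh_K^{\mu-\ord}$ and its coincidence with the union of \emph{all} maximal EKOR strata) are precisely (2a) and (2b). To obtain the equivalence with (2c), namely the group-theoretic identity $p(W_K)\,W_0^\sigma W_\mu = W_0$, I need the ``quasi-split'' hypothesis of Theorem \ref{density} to be in force. Here I would use the observation already recorded in the excerpt: the reductive groups $G$ entering the PEL data in these cases — unramified groups of type $A$ and $C$ and odd ramified unitary groups — are all quasi-split. Hence the third bullet of Theorem \ref{density} applies and yields the equivalence of (2a), (2b), (2c).

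There is essentially no additional mathematical obstacle in this reduction: the work has been done in the two preceding theorems and in \cite{HZ}. The only point worth verifying carefully is the quasi-split assertion case-by-case (trivial for unramified type $A$ and type $C$; for odd ramified unitary groups, $\mathrm{U}(n)$ with $n$ odd is quasi-split because the associated hermitian form of odd rank over a ramified quadratic extension is of maximal Witt index up to isomorphism). Once these ingredients are assembled, the theorem follows by directly quoting Theorem \ref{union} for (1) and Theorem \ref{density} for (2).
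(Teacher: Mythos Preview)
Your proposal is correct and matches the paper's own treatment: the theorem is stated in the introduction as an immediate unconditional consequence of Theorems \ref{union} and \ref{density} once one inputs the axiom verification of \cite{HZ} and the parenthetical remark that the groups in question are quasi-split. The only small point you leave implicit is that quasi-splitness also guarantees (via Example \ref{eg-1}) that $\mu^\diamond$ is actually attained in $B(G,\{\mu\})$, so that $Sh_K^{\mu\text{-}\mathrm{ord}}$ is nonempty and Theorems \ref{union} and \ref{density} apply; but this is automatic and the paper does not spell it out either.
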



\subsection{} Finally, let us have a short discussion on the case where the $\mu$-ordinary locus is empty.

The natural range of the index set of the Newton strata is the set $B(G, \{\mu\})$ of neutral acceptable elements. By definition, all the elements in $B(G, \{\mu\})$ have Newton points less than or equal to $\mu^\diamond$, the Galois-average of a dominant representative in $\{\mu\}$. The $\mu$-ordinary locus $Sh^{\mu-\ord}_K$, if nonempty, is the unique maximal Newton stratum.

In certain cases, all the elements in $B(G, \{\mu\})$ are strictly less than $\mu^\diamond$. So the $\mu$-ordinary locus $Sh_K^{\mu-\ord}$ is empty. However, there is still a unique maximal element in $B(G, \{\mu\})$. Several rather different phenomena may occur: the maximal Newton stratum intersects some non-maximal EKOR stratum, and the maximal Newton stratum may not be a union of EKOR strata. We provide some examples in section \ref{non-qs}.

We conjecture that the maximal Newton stratum is dense if and only if it intersects every maximal EKOR stratum. In Proposition \ref{gp-den}, we prove the group-theoretic analogy of this conjecture.

\subsection{Acknowledgments}
We thank Ulrich G\"ortz and Michael Rapoport for valuable discussions and comments, and for drawing our attention to and for explaining the results of Hartwig \cite{Ha}.

\section{Preliminary}

\subsection{} We follow the set-up in \cite{HR}.

Let $({\bf G}, \{h\})$ be a Shimura datum and let ${\bf K}=K^p K$  be an open compact subgroup of ${\mathbf G}(\BA_f)$, where $\bf K^p \subset {\mathbf G}(\BA^p_f)$ and $K=K_p$ is a parahoric subgroup of ${\mathbf G}(\BQ_p)$. Let $G={\mathbf G} \otimes_\BQ \BQ_p$ and let $\{\mu\}$ be the conjugacy class of cocharacters of $G$ corresponding to $\{h\}$.

Let ${\rm Sh}_{\mathbf K}={\rm Sh}({\mathbf G}, \{h\})_{\mathbf K}$ be the corresponding Shimura variety. It is a quasi-projective variety defined over the Shimura field $\bE$. Let $O_E$ be the ring of integers of the completion $E$ of $\bE$ at a place ${\bf p}$ above the fixed prime number $p$. Let ${Sh}_K={\mathbf S}_{\bK}\times_{\Spec O_{\bE}}\Spec  \kappa_E$ be the special fiber.

\smallskip

{\bf Unless otherwise stated, in the rest of the paper, we assume the existence of an integral model ${\mathbf S_{\bK}}$ over $O_E$ and \cite[Axiom 1--5]{HR} holds.}

\subsection{} Let $\breve \BQ_p$ be the completion of the maximal unramified extension of $\BQ_p$ in a fixed algebraic closure $\ov\BQ_p$, with ring of integers $O_{\breve \BQ_p}$. We denote by $\sigma$ its Frobenius automorphism of $\breve\BQ_p$ over $\BQ_p$. Let $\Gamma=\Gal(\ov\BQ_p/\BQ_p)$ be the absolute Galois group and $\Gamma_0=\Gal(\ov\BQ_p/\BQ_p^{un})$ be the inertia subgroup.

Let $I$ be an Iwahori subgroup of $G$ that is contained in $K$. Let $\breve K$ be the parahoric subgroup of $G(\breve \BQ_p)$ associated to $K$ and $\brK_1$ the pro-unipotent radical of $\brK$. We fix a maximal torus $T$ which after extension of scalars is contained in a Borel subgroup of $G\otimes_{\BQ_p}\breve\BQ_p$, and such that $\breve I$ is the Iwahori subgroup fixing an alcove in the apartment attached to the split part of $T$.

Let $(X_*(T)_{\Gamma_0, \BQ})^+$ be the intersection of  $X_*(T)_{\Gamma_0}\otimes \BQ=X_*(T)^{\Gamma_0}\otimes \BQ$ with the set $X_*(T)_\BQ^+$ of dominant elements in $X_*(T)_\BQ$. Let $\pi_1(G)_\G$ be the set of $\G$-coinvariants of $\pi_1(G)$. The $\sigma$-conjugacy classes of $G(\breve\BQ_p)$ are classified by Kottwitz in \cite{Ko1} and \cite{Ko2} via the Newton map and the Kottwitz map (see \cite[(2.5) \& (2.6)]{HR})
$$(\nu, \k): B(G) \hookrightarrow \big((X_*(T)_{\Gamma_0, \BQ})^+\big)^{\langle\sigma\rangle} \times \pi_1(G)_\G.$$

The action of $\s$ on $X_*(T)_{\Gamma_0}\otimes \BQ$ induces a linear action $\s_0$ on $(X_*(T)_{\Gamma_0, \BQ})^+$ (the $L$-action). Let $B(G, \{\mu\})$ be the set of {\it neutral acceptable elements} with respect to $\{\mu\}$ in $B(G)$ defined by $$ B(G, \{\mu\})=\{ [b]\in B(G)\mid \kappa([b])=\mu^\natural, \nu([b])\le \mu^\diamond \}, $$ where $\mu^\natural$ denotes the common image of the elements of $\{\mu\}$ in $\pi_1(G)_\Gamma$, and $\mu^\diamond$ denotes the $\s_0$-average of a dominant representative $\mu$ of the image of an element  of $\{\mu\}$ in $X_*(T)_{\Gamma_0, \BQ}$.

By \cite[Axiom 3 \& Theorem 5.4]{HR}, there is a surjective map $$\delta_K: Sh_K\to B(G, \{\mu\}).
$$ For each $[b]\in B(G, \{\mu\})$, the fiber of $\delta_K$ over $[b]$ is the set of $\bar\kappa_E$-rational points of the {\it Newton stratum} $\mathit S_{K,[b]}$ of $Sh_K$ attached to $[b]$.\

\subsection{} We recall the Ekedahl-Kottwitz-Oort-Rapoport (in short, EKOR) stratification introduced in \cite[\S 6]{HR}. We first introduce some more notations.

Let $N$ be the normalizer of $T$. Then the {\it relative Weyl group} is defined by $W_0=N(\breve\BQ_p)/T(\breve\BQ_p)$ and the {\it Iwahori-Weyl group} is defined by $\tilde W=N(\breve\BQ_p)/(T(\breve\BQ_p)\cap \breve I)$. We fix a special vertex in the base alcove. Then $\tilde W$ is a split extension of $W_0$ by the abelian subgroup $X_*(T)_{\Gamma_0}$, with its natural $W_0$-action.

Note that the Frobenius morphism preserves the Iwahori subgroup $\brI$. Thus it induces a length-preserving automorphism on $\tW$, which we still denote by $\s$.

The {\it $\{\mu\}$-admissible set} is defined by
\[
\Adm(\{\mu\})=\{w \in \tilde W; w \le t^{x(\underline \mu)} \text{ for some }x \in W_0\} .
\]
Here $\underline \mu$ is the image in $X_*(T)_{\Gamma_0}$ of the dominant representative $\mu$ of $\{\mu\}$ and $\le$ is the Bruhat order on $\tilde W$.

Recall that $K$ is a parahoric subgroup containing $I$. Let
$$W_K=\tilde W\cap \breve K=\big(N(\breve\BQ_p)\cap \breve K\big)/(T(\breve\BQ_p)\cap \breve I) .$$
We set
\[
  \Adm(\{\mu\})^K=W_K \Adm(\{\mu\}) W_K, \quad
    \Adm(\{\mu\})_K=W_K \backslash \Adm(\{\mu\})^K /W_K.
\]

Let ${}^K \tW \subset \tW$ be the set of minimal elements in their right cosets. It is proved in \cite[Theorem 6.1]{He-KR} (see also \cite[Proposition 5.1]{HH} for a different proof) that

\begin{theorem}\label{comp}
For any standard parahoric subgroup $K$,
\begin{flalign*}\phantom{\qed} & & \Adm(\{\mu\})^K \cap {}^K \tilde W=\Adm(\{\mu\}) \cap {}^K \tilde W.& &\end{flalign*}
\end{theorem}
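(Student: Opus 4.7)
The inclusion $\Adm(\{\mu\}) \cap {}^K \tW \subseteq \Adm(\{\mu\})^K \cap {}^K \tW$ is immediate from the definition $\Adm(\{\mu\})^K = W_K \Adm(\{\mu\}) W_K$, so all of the work is in the reverse inclusion. Starting with $w \in \Adm(\{\mu\})^K \cap {}^K \tW$, I would write $w = u y v$ with $u, v \in W_K$ and $y \in \Adm(\{\mu\})$, and the goal is to produce some $x \in W_0$ with $w \le t^{x(\underline\mu)}$.

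The first step in my plan is to normalize $y$ to have minimal length in its $W_K$-double coset, so that $y \in {}^K \tW \cap \tW^K$. The classical Coxeter-theoretic fact that the minimum length element of a parabolic double coset lies below every other element of that double coset in the Bruhat order guarantees that this new $y$ is still bounded by the original one, and hence remains in the order ideal $\Adm(\{\mu\})$. With this normalization, the right $W_K$-coset of $w$ equals $W_K y v$, and $w$ is characterized as the unique minimal length element of $W_K y v$ in ${}^K \tW$.

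The heart of the argument is then the following inductive claim, proved on $\ell(v)$: for any $y \in {}^K \tW \cap \tW^K \cap \Adm(\{\mu\})$ and any $v \in W_K$, the minimal length representative of $W_K y v$ in ${}^K \tW$ lies in $\Adm(\{\mu\})$. For the inductive step I would write $v = v' s$ with $s$ a simple reflection and $\ell(v) = \ell(v') + 1$, apply induction to obtain a minimal length representative $w'$ of $W_K y v'$ with $w' \le t^{x(\underline\mu)}$ for some $x \in W_0$, and then pass to the minimal length representative of $W_K w' s$ via a Bruhat-order lifting argument. The key identity $s t^\lambda = t^{s(\lambda)} s$ (where the same symbol $s$ denotes the projection of the simple reflection to $W_0$) allows the cocharacter $x(\underline\mu)$ to be updated to a new element of the $W_0$-orbit of $\underline\mu$ as needed; combined with the cancellation that occurs when one passes to the minimal length representative of a right $W_K$-coset, this preserves membership in $\Adm(\{\mu\})$.

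The main obstacle will be the detailed case analysis inside the inductive step, where one must balance three interacting structures: the Bruhat order on $\tW$, the structure of right $W_K$-cosets and their minimal length representatives, and the $W_0$-action on $W_0 \underline\mu$ coming from the conjugation rule for translations. This is essentially the bookkeeping executed in \cite[Theorem 6.1]{He-KR} and, in a somewhat different form, in \cite[Proposition 5.1]{HH}; both succeed by carefully tracking how the ``excess'' simple reflections from $W_K$ get absorbed when one projects from $W_K \Adm(\{\mu\}) W_K$ down to ${}^K \tW$.
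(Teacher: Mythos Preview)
The paper does not prove this statement itself; it simply quotes it from \cite[Theorem~6.1]{He-KR}, with \cite[Proposition~5.1]{HH} as an alternative reference. Your inductive strategy is correct, and the case analysis you anticipate is in fact shorter than you suggest once you make one ingredient explicit: the projection $w \mapsto \min(W_K w)$ from $\tW$ to ${}^K\tW$ is Bruhat-order-preserving. With this in hand the inductive step runs cleanly. Given $w' = \min(W_K y v') \le t^{x(\underline\mu)}$ and a simple reflection $s \in W_K$, the lifting property yields $w's \le t^{x(\underline\mu)}$ or $w's \le t^{x(\underline\mu)} s = s\, t^{p(s)\,x(\underline\mu)}$. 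In the first case $\min(W_K w's) \le t^{x(\underline\mu)}$ directly; in the second, applying the order-preserving projection and absorbing $s \in W_K$ on the left gives $\min(W_K w's) \le \min\bigl(W_K\, t^{p(s)\,x(\underline\mu)}\bigr) \le t^{p(s)\,x(\underline\mu)}$. Either way $\min(W_K w's) \in \Adm(\{\mu\})$, and since $W_K w's = W_K y v$ this closes the induction in two lines rather than a lengthy case split.

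One caveat: your assertion that this is ``essentially the bookkeeping executed in \cite{He-KR} and \cite{HH}'' overstates the match. The argument in \cite{HH} proceeds via a building-theoretic vertexwise characterization of admissibility, which is a genuinely different mechanism from a coset-length induction; the proof in \cite{He-KR} is closer in spirit but is organized around different intermediate statements. Your direct argument is valid on its own terms and arguably more elementary than either cited source, but you should not present it as a paraphrase of those proofs.
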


\subsection{} Note that $\s(\brK)=\brK$. Let $\brK_\s \subset \brK \times \brK$ be the graph of the Frobenius map $\s$. By \cite[\S 6.1]{HR}, we have a map $$\upsilon_K :  Sh_K  \to G(\breve\BQ_p)/\breve K_\sigma (\breve K_1 \times \breve K_1) \cong {}^K \tW.$$ By \cite[Corollary 6.13]{HR}, the image of $\upsilon_K$ is $\Adm(\{\mu\}) \cap {}^K \tilde W$. For any $x \in \Adm(\{\mu\}) \cap {}^K \tilde W$, we denote by $EKOR_{K, x} = \upsilon_K ^{-1}(x) \subset   Sh_K$ the  {\it Ekedahl-Kottwitz-Oort-Rapoport stratum} (EKOR stratum) of $Sh_K$ attached to $x$.

Now we introduce a partial order on ${}^K \tilde W$.
Let $x, x' \in {}^K \tilde W$, we write $x' \preceq_{K, \sigma} x$ if there exists $w\in W_K$ such that $w x' \sigma(w)^{-1} \le x$. By \cite[4.7]{He-07}, this defines a partial order on ${}^K \tilde W$. It is proved in \cite[Theorem 6.15]{HR} that

\begin{theorem}\label{poset}
For any $x \in \Adm(\{\mu\}) \cap {}^K \tilde W$, $$\overline{EKOR_{K, x}}=\sqcup_{x' \in {}^K \tilde W, x' \preceq_{K, \sigma} x} EKOR_{K, x'}.
$$
\end{theorem}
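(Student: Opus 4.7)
The plan is to translate the scheme-theoretic closure on the left-hand side into the combinatorial closure dictated by $\preceq_{K,\sigma}$ through two successive reductions. The first reduction is via the Kottwitz-Rapoport stratification: by construction the map $\upsilon_K$ refines $\lambda_K: Sh_K \to \Adm(\{\mu\})_K$, so $\overline{EKOR_{K,x}}$ splits according to the KR stratum in which a closure point lies. The second reduction is within a single KR stratum $\lambda_K^{-1}(W_K w W_K)$, where the EKOR pieces are cut out by $\breve K_\sigma$-orbits on the finite-type quotient $\breve K w \breve K/\breve K_1$, so that the analysis of closures becomes a question about a pro-algebraic group action.

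Concretely, I would proceed as follows. First, invoke Axiom 4 of \cite{HR} to get the closure relations of the KR strata in terms of the Bruhat order on $\Adm(\{\mu\})_K$; Theorem \ref{comp} then identifies the candidate minimal representatives $x' \in {}^K \tW$ that could contribute to $\overline{EKOR_{K,x}}$, and one checks that $\preceq_{K,\sigma}$ induces a partial order on ${}^K \tW$ refining the Bruhat order on the corresponding double cosets. Second, restrict to a single KR stratum and use the local model diagram (Axiom 5) together with the definition of $\upsilon_K$ to transport the closure problem to the study of $\breve K_\sigma$-orbits on $\breve K w \breve K/\breve K_1$ under $\sigma$-twisted conjugation. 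At this point the closure of any such orbit is controlled by $\preceq_{K,\sigma}$ by the purely group-theoretic result \cite[4.7]{He-07}. Combining the two reductions yields both the inclusion $\overline{EKOR_{K,x}} \subseteq \sqcup_{x' \preceq_{K,\sigma} x} EKOR_{K,x'}$ and the fact that each $EKOR_{K,x'}$ on the right-hand side is non-empty and meets the closure.

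The main obstacle will be verifying that group-theoretic orbit closures in the local model lift to honest scheme-theoretic closures on $Sh_K$. This requires two ingredients: the pro-unipotence of $\brK_1$, which ensures that the finite-dimensional quotient $\breve K w \breve K / \brK_1$ faithfully records closures in the pro-object, and the smoothness of the local model morphism from Axiom 5, which allows closures to be pulled back and pushed forward without loss. The delicate point is the reverse inclusion: one must exhibit an actual degeneration from a point in $EKOR_{K,x}$ to a point in each $EKOR_{K,x'}$ with $x' \preceq_{K,\sigma} x$, possibly crossing between KR strata. This is handled by combining the explicit Bruhat degenerations inside a single orbit closure with the transitivity of closure: if $x' \preceq_{K,\sigma} x$ and the two lie in different KR strata, one interpolates through an intermediate $x''$ in the closure of the KR stratum of $x$ and applies the within-stratum argument to $x' \preceq_{K,\sigma} x''$, all of which is clean once Axioms 4 and 5 are in hand.
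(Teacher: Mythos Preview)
The paper does not prove this theorem; it is quoted verbatim from \cite[Theorem 6.15]{HR}, with the preceding sentence ``It is proved in \cite[Theorem 6.15]{HR} that'' serving as the entire justification. So there is no in-paper argument to compare against.

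Your sketch is a plausible outline of the strategy actually carried out in \cite{HR}: reduce first to closure relations among KR strata via Axiom~4, then within a fixed KR stratum transport the problem via the local-model/smooth-cover machinery of Axiom~5 to $\breve K_\sigma$-orbit closures on a finite-type quotient, and finally invoke a group-theoretic closure result. Two small corrections. First, the reference \cite[4.7]{He-07} is cited in the present paper only to establish that $\preceq_{K,\sigma}$ is a partial order; the closure description of $\breve K_\sigma$-orbits in terms of $\preceq_{K,\sigma}$ is a separate result (in \cite{HR} it is deduced from the Deligne--Lusztig-type reduction and the minimal-length theory, not directly from \cite[4.7]{He-07}). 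Second, your ``interpolation through an intermediate $x''$'' for the reverse inclusion across KR strata is not quite how the argument runs: in \cite{HR} the reverse inclusion comes from the explicit degenerations produced by the $\sigma$-conjugation reduction steps $w \to s_i w \sigma(s_i)$ and $w \to s_i w$, which simultaneously handle both the within-stratum and cross-stratum cases without needing to manufacture an intermediate element. Apart from these points your high-level picture matches the cited proof.
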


\section{The $\mu$-ordinary locus and the maximal EKOR strata}

We first describe the maximal EKOR strata.


\begin{proposition}\label{ma-EKOR}
The maximal EKOR strata of $Sh_K$ with respect to the closure relations are $EKOR_{K, t^{\mu'}}$, where $\mu'$ runs over elements in the $W_0$-orbit of $\mu$ with $t^{\mu'} \in {}^K \tW$.
\end{proposition}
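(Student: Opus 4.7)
The plan is to identify the maximal elements of the poset $(\Adm(\{\mu\}) \cap {}^K \tW, \preceq_{K,\sigma})$, since by Theorem~\ref{poset} these index the maximal EKOR strata under closure.

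First, I would establish the crucial length inequality: for any $x' \in {}^K \tW$ and $w \in W_K$, one has $\ell(w x' \sigma(w)^{-1}) \ge \ell(x')$. The point is that $x' \in {}^K \tW$ forces $\ell(w x') = \ell(w) + \ell(x')$ by the defining property of ${}^K \tW$; the triangle inequality for length, together with $\ell(\sigma(w)) = \ell(w)$, then yields the claim. Consequently, if $x' \preceq_{K,\sigma} x$ in ${}^K \tW$, then $\ell(x') \le \ell(x)$. Moreover, when equality $\ell(x') = \ell(x)$ holds, the intermediate element $w x' \sigma(w)^{-1}$ must coincide with $x$ (same length and $\le$), and substituting $w^{-1}$ for $w$ symmetrically shows $x \preceq_{K,\sigma} x'$; antisymmetry of $\preceq_{K,\sigma}$ then forces $x = x'$.

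Second, recall that the Bruhat-maximum elements of $\Adm(\{\mu\})$ are precisely the translations $\{t^{\mu'} : \mu' \in W_0 \cdot \underline\mu\}$, all of length $\ell(t^{\underline\mu})$, since $\ell(t^\lambda)$ is constant on $W_0$-orbits of $\lambda$. Combining this with the length inequality: for each $t^{\mu'}$ as in the statement (so $t^{\mu'} \in {}^K \tW$), any $y \in \Adm(\{\mu\}) \cap {}^K \tW$ with $t^{\mu'} \preceq_{K,\sigma} y$ satisfies $\ell(t^{\mu'}) \le \ell(y) \le \ell(t^{\underline\mu}) = \ell(t^{\mu'})$, and hence $y = t^{\mu'}$. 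Thus every such $t^{\mu'}$ is $\preceq_{K,\sigma}$-maximal.

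Finally, for the converse, take a maximal $y \in \Adm(\{\mu\}) \cap {}^K \tW$. By definition there exists $\mu_0 \in W_0 \cdot \underline\mu$ with $y \le t^{\mu_0}$. In the easy case $t^{\mu_0} \in {}^K \tW$, we have $y \preceq_{K,\sigma} t^{\mu_0}$ by taking $w = e$, and maximality of $y$ gives $y = t^{\mu_0}$. When $t^{\mu_0} \notin {}^K \tW$, one must instead exhibit some $\mu' \in W_0 \cdot \underline\mu$ with $t^{\mu'} \in {}^K \tW$ and some $w \in W_K$ realizing $w y \sigma(w)^{-1} \le t^{\mu'}$; the plan is to use Theorem~\ref{comp} (i.e.\ $\Adm(\{\mu\})^K \cap {}^K \tW = \Adm(\{\mu\}) \cap {}^K \tW$ with $\Adm(\{\mu\})^K = W_K \Adm(\{\mu\}) W_K$), together with the fact that translations in a single $W_0$-orbit are conjugate by $W_K$, to navigate between Bruhat upper bounds and $\sigma$-twisted $W_K$-conjugates. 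This last step is the main obstacle: it requires a careful combinatorial argument balancing the Bruhat order against $\sigma$-twisted $W_K$-conjugation on translation elements to produce a suitable $t^{\mu'}$ lying $\preceq_{K,\sigma}$-above $y$, even when no Bruhat upper bound of $y$ among the $t^{\mu_0}$ happens to lie in ${}^K \tW$.
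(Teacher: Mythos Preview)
Your first two steps are correct and match the paper exactly: the length inequality $\ell(w x' \sigma(w)^{-1}) \ge \ell(x')$ for $x' \in {}^K\tW$, $w \in W_K$, and the consequence that each $t^{\mu'} \in {}^K\tW$ with $\mu' \in W_0(\mu)$ is $\preceq_{K,\sigma}$-maximal.

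The converse, however, has a genuine gap, and your proposed route is more complicated than necessary. You frame the remaining task as producing $w \in W_K$ with $w y \sigma(w)^{-1} \le t^{\mu'}$ for some $t^{\mu'} \in {}^K\tW$, and you invoke Theorem~\ref{comp} together with the claim that ``translations in a single $W_0$-orbit are conjugate by $W_K$''. That last claim is false in general ($W_K$ is typically much smaller than $W_0$), and Theorem~\ref{comp} is not needed here. More importantly, no $\sigma$-twisted conjugation is required at all: the paper settles the converse purely in the Bruhat order.

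Here is the missing idea. Given a $\preceq_{K,\sigma}$-maximal $x \in \Adm(\{\mu\}) \cap {}^K\tW$ with $x \le t^{\mu_0}$, pass to the minimal-length representative $y$ of the coset $W_K t^{\mu_0}$; since $x \in {}^K\tW$, one still has $x \le y$. Because $y \in W_K t^{\mu_0}$ one can write $y = t^{\lambda} u$ with $u \in W_K$ and $\lambda \in X_*(T)_{\Gamma_0}$. The condition $y \in {}^K\tW$ says $y^{-1}$ sends every simple root of $K$ to a positive affine root; since $u \in W_K$, this forces $t^{-\lambda}$ to do the same, i.e.\ $t^{\lambda} \in {}^K\tW$. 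Then $\ell(y) = \ell(t^{\lambda}) - \ell(u)$, so $y \le t^{\lambda}$ in Bruhat order. Finally $t^{\lambda} \in W_K t^{\mu_0} W_K$ forces $\lambda \in W_0(\mu)$. Thus $x \le t^{\lambda}$ with $t^{\lambda} \in {}^K\tW$, so $x \preceq_{K,\sigma} t^{\lambda}$ with $w = e$, and maximality gives $x = t^{\lambda}$. The ``hard case'' you anticipated dissolves: one always finds a translation upper bound in ${}^K\tW$ dominating $x$ in the ordinary Bruhat order.
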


\begin{proof}
Let $x \in {}^K \tW$ and $w \in W_K$. Then $$\ell(w x \s(w) \i) \ge \ell(w x)-\ell(w)=\ell(w)+\ell(x)-\ell(w)=\ell(x).$$ Therefore if $x' \preceq_{K, \s} x$, then $\ell(x') \le \ell(x)$. We denote by $W_0(\mu)$ the $W_0$-orbit of $\mu$. Thus if $\mu' \in W_0(\mu)$ with $t^{\mu'} \in {}^K \tW$, then $t^{\mu'}$ is a maximal length element in the index set $\Adm(\{\mu\}) \cap {}^K \tilde W$ of the EKOR strata and hence $EKOR_{K, t^{\mu'}}$ is a maximal EKOR stratum.

On the other hand, suppose that $x \in \Adm(\{\mu\}) \cap {}^K \tilde W$ is a maximal element with respect to the partial order $\preceq_{K, \s}$. By definition $x \le t^{\mu'}$ for some $\mu' \in W_0 (\mu)$. Then $x \le y$, where $y \in W_K t^{\mu'} \cap {}^K \tW$. Noticing that $y \in X_*(T)_{\Gamma_0} \rtimes W_K$ we can write $y=t^\l u$ for some $\l \in X_*(T)_{\Gamma_0}$ and $u \in W_K$. As $y \in {}^K \tW$, $y \i$ sends each simple root in $K$ to a positive affine root. Therefore $t^{-\l}$ sends each simple root in $K$ to a positive affine root. In other words, $t^\l \in {}^K \tW$. As $u \in W_K$, we have $\ell(t^\l u)=\ell(t^\l)-\ell(u)$. Hence $y \le t^\l$. Since $W_K$ is finite and $t^\l \in W_K t^{\mu'} W_K$, we have $\l \in W_0(\mu')=W_0(\mu)$. By the maximality of $x$ we have $x=t^\l$ as desired.
\end{proof}

\subsection{} We recall the relation between the $\s$-conjugacy classes on $\tW$ and on $G(\breve \BQ_p)$.

For any $w \in \tW$, we choose a representative $\dot w \in N(\breve \BQ_p)$. We say that $w$ is $\s$-straight if $\ell(w)=\<\nu(\dot w), 2 \rho\>$, where $\rho$ is the half sum of the positive roots in the reduced root system associated to $\tW$ (see \cite[Proposition 4.21]{PRS}). By \cite[2.4]{H1}, $w$ is $\s$-straight if and only if for any $n \in \BN$, $\ell(w \s(w) \cdots \s^{n-1}(w))=n \ell(w)$. In particular, the definition of $\s$-straight elements is independent of the representatives we choose.

We denote by $B(\tW)_\s$ the set of $\s$-conjugacy classes on $\tW$. We call a $\s$-conjugacy class of $\tW$ {\it straight} if it contains a $\s$-straight element. We denote by $B(\tW)_{\s-str}$ the set of straight $\s$-conjugacy classes of $\tW$. It is proved in \cite[Theorem 3.3]{H1} that

\begin{theorem}\label{s-str}
The map $N(\breve \BQ_p) \to G(\breve \BQ_p)$ induces a bijection $$B(\tW)_{\s-str} \cong B(G).$$
\end{theorem}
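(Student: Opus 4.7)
The plan is to factor the proof through Kottwitz's classification of $B(G)$ by $(\nu,\k)$ and the theory of minimal length elements in $\s$-conjugacy classes of $\tW$. First I would argue that $w\mapsto [\dot w]_\s$ descends to a well-defined map $B(\tW)_\s\to B(G)$: the representative $\dot w\in N(\breve\BQ_p)$ is ambiguous up to $T(\breve\BQ_p)\cap\brI$, but Steinberg's theorem applied to the maximal torus with twisted Frobenius $\Ad(\dot w)\circ\s$ shows that any two choices are $\s$-conjugate in $G(\breve\BQ_p)$; and if $w'=uw\s(u)\i$ in $\tW$, then $\dot u\dot w\s(\dot u)\i$ differs from $\dot w'$ by such a torus element. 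This gives a well-defined map on all of $B(\tW)_\s$, which we restrict to the straight part.

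For a $\s$-straight $w$ I would compute $(\nu([\dot w]),\k([\dot w]))$ directly from $w$. The Kottwitz point is the image of $w$ under the natural projection $\tW\to\pi_1(G)_\G$. For the Newton point, the straightness identity $\ell(w\s(w)\cdots\s^{n-1}(w))=n\ell(w)$ forces the translation parts to accumulate without cancellation, so for sufficiently divisible $n$ the translation component of $w\s(w)\cdots\s^{n-1}(w)$ in $X_*(T)_{\Gamma_0}$ is already dominant; dividing by $n$ yields the $\s_0$-averaged image of the translation part of $w$, which must equal $\nu([\dot w])$. These invariants are manifestly constant on each $\s$-conjugacy class in $\tW$.

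Injectivity would follow by a Levi reduction. Two $\s$-straight elements $w_1, w_2$ with equal $(\nu,\k)$ both lie in the Iwahori-Weyl group $\tW_M$ of the standard Levi $M$ centralizing the common Newton point $\nu$, where they become basic. Basic $\s$-conjugacy classes in $\tW_M$ (respectively $M(\breve\BQ_p)$) are in bijection with $\pi_1(M)_\G$, and the compatibility between the Kottwitz maps for $M$ and for $G$, restricted to straight elements, would then force $w_1$ and $w_2$ to be $\s$-conjugate in $\tW_M\subseteq\tW$.

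The main obstacle is surjectivity: every $[b]\in B(G)$ must contain a $\s$-straight representative. I would use the Iwahori-Bruhat decomposition $G(\breve\BQ_p)=\sqcup_{w\in\tW}\brI\dot w\brI$ to select $w_0$ of minimal length with $[b]\cap\brI\dot w_0\brI\neq\emptyset$, and then prove $w_0$ is $\s$-straight via a Deligne-Lusztig-style length-reduction: if a simple affine reflection $s$ satisfies $\ell(sw_0\s(s)\i)<\ell(w_0)$, one produces a strictly shorter element in the same $[b]$ by a cyclic shift, contradicting minimality unless the straightness identity already holds for $w_0$. Translating this reduction carefully between the combinatorics of simple affine reflections in $\tW$ and $\brI$-double coset manipulations in $G(\breve\BQ_p)$ forms the technical heart of the theorem and is where the hardest bookkeeping resides.
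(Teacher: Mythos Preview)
The paper does not prove this statement; it is quoted from \cite[Theorem~3.3]{H1}. Your outline of well-definedness (via Lang--Steinberg on the torus with twisted Frobenius) and of injectivity (reduction to the Levi $M$ centralizing the common Newton point, where both classes become basic and are detected by $\pi_1(M)_\Gamma$) matches the strategy of \cite{H1} and is sound.

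The surjectivity argument, however, has a real gap. Deligne--Lusztig reduction shows only that your globally minimal $w_0$ cannot be shortened by a cyclic shift $w_0\mapsto sw_0\s(s)^{-1}$, hence that $w_0$ is a minimal length element in its own $\s$-conjugacy class in $\tW$. That is strictly weaker than straightness. For a concrete example take $G=SL_2$ with the usual Frobenius: the affine simple reflection $s_0$ has length $1$ and is a minimal length element in its $\tW$-conjugacy class, yet $\langle\nu(\dot s_0),2\rho\rangle=0$, so $s_0$ is not $\s$-straight; its image in $B(G)$ is the trivial class $[1]$, whose straight representative is the identity of length $0$. Several $\s$-conjugacy classes of $\tW$ can map to a single $[b]\in B(G)$, only one of which is straight, and nothing in your cyclic-shift argument forces the globally minimal $w_0$ to land in that one. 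What is missing is an independent proof that $\ell(w_0)=\langle\nu(b),2\rho\rangle$; the inequality $\ell(w_0)\ge\langle\nu(b),2\rho\rangle$ alone does not force equality without already knowing that a straight representative exists.

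In \cite{H1} surjectivity is handled constructively, by the same Levi mechanism you use for injectivity: given $[b]$ with Newton point $\nu$, pass to the standard Levi $M$ centralizing $\nu$, where $[b]$ arises from a basic class of $M(\breve\BQ_p)$; a basic representative can be chosen of length zero in $\tW_M$, and such an element is automatically $\s$-straight when regarded in $\tW$.
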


\smallskip

We have the following result on the intersection of an $\brI$-double coset with a $\s$-conjugacy class of $B(\breve \BQ_p)$.

\begin{proposition}\label{leq}
Let $b \in G(\breve \BQ_p)$ and $w \in \tW$. Then

(1) If $\brI \dot w \brI \cap [b] \neq \emptyset$, then $\k(w)=\k(b)$ and $\ell(w) \ge \<\nu(b), 2 \rho\>$.

(2) If $\brI \dot w \brI \cap [b] \neq \emptyset$ and $\ell(w)=\<\nu(b), 2 \rho\>$, then $w$ is a $\s$-straight element and $\brI \dot w \brI \subset [b]$.
\end{proposition}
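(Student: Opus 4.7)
The equality $\k(w) = \k(b)$ in (1) should be immediate: the Kottwitz map $\k: G(\breve\BQ_p) \to \pi_1(G)_\G$ is $\s$-invariant and trivial on $\brI$, so for any $g = i_1 \dot w i_2 \in \brI \dot w \brI$ that is $\s$-conjugate to $b$, I get $\k(w) = \k(g) = \k(b)$ at once.

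For the length inequality and for (2), the plan is to invoke the reduction method for $\s$-conjugacy classes in $\tW$ developed in \cite{H1}. The key input is: if $w \in \tW$ is not of minimal length in its $\s$-conjugacy class, then there exists a simple affine reflection $s$ with $\ell(s w \s(s)) \le \ell(w)$, and some such move strictly drops the length by $2$. Crucially, at each such step, the non-emptiness $\brI \dot w \brI \cap [b] \neq \emptyset$ transfers to $\brI \dot{s w \s(s)} \brI \cap [b] \neq \emptyset$ by $\s$-conjugating a witness by a lift of $s$. I would iterate this until I reach $w^* \in \tW$ of minimal length in its $\s$-conjugacy class, with $\ell(w^*) \le \ell(w)$ and $\brI \dot{w^*} \brI \cap [b] \neq \emptyset$. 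By the minimal-length theorem of \cite{H1}, such $w^*$ is $\s$-straight, so $\ell(w^*) = \<\nu(w^*), 2\rho\>$, and moreover $\brI \dot{w^*} \brI$ is contained in a single $\s$-conjugacy class, which must therefore equal $[b]$. Comparing Newton points, $\nu(w^*) = \nu(b)$ and $\ell(w) \ge \ell(w^*) = \<\nu(b), 2\rho\>$, proving (1).

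For (2), the equality $\ell(w) = \<\nu(b), 2\rho\>$ forces every step of the reduction to preserve the length, so $w$ itself is already of minimal length in its $\s$-conjugacy class, hence $\s$-straight. The containment $\brI \dot w \brI \subset [b]$ then follows from the same straight-class property invoked above: $\brI \dot w \brI$ is already contained in a single $\s$-conjugacy class, which the hypothesis pins down as $[b]$.

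The main obstacle I anticipate is the transfer statement: checking that the reduction in $\tW$ genuinely lifts to the level of $G(\breve\BQ_p)$-$\s$-conjugacy classes, i.e.\ that $\brI \dot w \brI \cap [b] \neq \emptyset$ implies $\brI \dot{s w \s(s)} \brI \cap [b] \neq \emptyset$ at each reduction step. This relies on an explicit decomposition of $\brI$ relative to a simple reflection $s$ together with the root-subgroup exchange moving between $\dot w$ and $s \dot w \s(s)$, and is where the group-theoretic content really enters; fortunately it is worked out in \cite{H1}. Once that lifting is in hand, the rest is a clean combination of the reduction method with the $\s$-straight property that identifies $[b]$ inside $B(G)$.
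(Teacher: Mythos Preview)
There is a genuine gap. Your reduction stays inside the $\s$-conjugacy class of $w$ in $\tW$ and then asserts that the resulting minimal-length representative $w^*$ is $\s$-straight. That assertion is false in general: only the \emph{straight} $\s$-conjugacy classes in $\tW$ have $\s$-straight minimal-length elements, and these form a proper subset $B(\tW)_{\s-str}$ of $B(\tW)_\s$ (this is exactly why Theorem~\ref{s-str} singles them out). For a concrete obstruction, take $\s$ acting trivially and any simple reflection $s\in W_0$: it is of minimal length in its $\tW$-conjugacy class, but $\nu(\dot s)=0$, so $\ell(s)=1\neq 0=\<\nu(\dot s),2\rho\>$ and $s$ is not $\s$-straight. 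In such a situation your chain $\ell(w)\ge\ell(w^*)=\<\nu(b),2\rho\>$ breaks at the last equality.

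The transfer step is where this enters. In the length-dropping case $sw<w$ and $w\s(s)<w$, $\s$-conjugating a witness $g\in\brI\dot w\brI$ by a lift of $s$ lands in $\brI\dot{sw\s(s)}\brI\cup\brI\dot{sw}\brI$, and one cannot force the first alternative; this is the Deligne--Lusztig branching. So the non-emptiness may pass to $sw$ rather than to $sw\s(s)$, leaving the $\s$-conjugacy class of $w$ in $\tW$. The paper's proof absorbs this by quoting \cite[Theorems~2.1 and 2.7]{He-KR}: from $\brI\dot w\brI\cap[b]\neq\emptyset$ one obtains a $\s$-straight $w'\in\CO_b$ with $w'\le w$ in the \emph{Bruhat} order (not $w'$ $\s$-conjugate to $w$), whence $\k(w)=\k(w')=\k(b)$ and $\ell(w)\ge\ell(w')=\<\nu(b),2\rho\>$; equality forces $w=w'$, and \cite[Theorem~3.7]{H1} then gives $\brI\dot w\brI\subset[b]$. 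If you replace your cyclic-shift reduction by the branching reduction and track the Bruhat order instead of $\s$-conjugacy in $\tW$, your outline becomes correct and coincides with the paper's argument.
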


\begin{proof}
Let $\CO \in B(\tW)_{\s-str}$. We write $\CO \preceq_\s w$ if there exists a $\s$-straight element $w' \in \CO$ such that $w' \le w$ in the usual Bruhat order. By \cite[Theorem 2.1 \& 2.7]{He-KR}, $\CO_b \preceq_\s w$, where $\CO_b$ is the straight $\s$-conjugacy class that corresponds to $[b]$. In other words, there exists a $\s$-straight element $w' \in \CO_b$ with $w' \le w$.

By the definition of the Bruhat order, we have $\k(w)=\k(w')=\k(b)$ and $\ell(w) \ge \ell(w')=\<\nu(\dot w), 2 \rho\>=\<\nu(b), 2 \rho\>$. Part (1) is proved.

If moreover, $\ell(w)=\<\nu(b), 2\rho\>$, then $w=w' \in \CO_b$. In other words, $w$ is a $\s$-straight element with $\dot w \in [b]$. By \cite[Theorem 3.7]{H1}, $\brI \dot w \brI \subset [b]$.
\end{proof}

\subsection{} By definition, for any $\s$-conjugacy class $[b]$ in $B(G, \{\mu\})$, $\nu([b]) \le \mu^{\diamond}$. In \cite[Corollary 2.6]{HN}, we give an explicit combinatorial criterion to check whether $\mu^{\diamond}$ equals the Newton point of some $\s$-conjugacy class in $B(G, \{\mu\})$. We will not recall the precise criterion here, but just provide some examples and counterexamples.

\begin{example}\label{eg-1}
Suppose that $G$ is quasi-split. Then $\mu^{\diamond}$ equals the Newton point of some $\s$-conjugacy class in $B(G, \{\mu\})$. Moreover, we prove that

(a) For $\mu' \in W_0(\mu)$, $t^{\mu'}$ is $\s$-straight if and only if $\mu' \in W_0^\s(\mu)$.

If $\mu' \in W_0^\s(\mu)$, then $\nu([t^{\mu'}])=\mu^\diamond$ and $\ell(t^{\mu'})=\<\nu(t^{\mu'}), 2 \rho\>$. So $t^{\mu'}$ is $\s$-straight.

On the other hand, for any $\mu' \in W_0(\mu)$, by \cite{He-KR} we have $\nu(t^{\mu'}) \preceq \mu^\diamond$. If $t^{\mu'}$ is $\s$-straight, then $$\<\nu(t^{\mu'}), 2 \rho\>=\ell(t^{\mu'})=\ell(t^\mu)=\<\mu^\diamond, 2 \rho\>.$$ Thus $\nu(t^{\mu'})=\mu^\diamond=\nu(t^\mu)$. By Proposition \ref{leq} (2), $t^{\mu'}$ is $\s$-conjugate to $t^\mu$. In other words, there exists $z \in W_0$ and $\l \in \mu+(1-\s) X_*(T)_{\Gamma_0}$ such that $z t^{\mu'} \s(z) \i=t^\l$. Thus $z=\s(z)$ and $t^{z(\mu')}=t^\l$. However, $\mu-z(\mu') \in \sum \BN \a_i^\vee$ and $(1-\s) X_*(T)_{\Gamma_0} \cap \sum \BN \a_i^\vee=\{0\}$. Therefore $z(\mu')=\mu$ and $\mu' \in W_0^\s(\mu)$. So (a) is proved.
\end{example}

\begin{example}\label{eg-11}
Assume $G$ is an inner form of $GL_n$ and $\mu$ is minuscule. Then $\tW=\BZ^n \rtimes S_n$. We determine when $\mu^\diamond$ is the maximal element in $B(G, \{\mu\})$. Suppose $\mu=\o_m^\vee$ with $0 \le m \le n-1$ and $\s=\Ad(\t_1^k)$ for some $0 \le k \le n-1$, where $\t_1$ is the unique length zero element in $t^{\o_1^\vee} W_0$. Then, by \cite[Corollary 2.6]{HN}, $\mu^\diamond$ is the maximal element in $B(G, \{\mu\})$ if and only if $\<\o_k^\vee, \o_m\> \in \BZ$, that is, $m k$ is divisible by $n$.
\end{example}

In the rest of this section, we assume that there exists $[b] \in B(G, \{\mu\})$ with $\nu([b])=\mu^{\diamond}$. We denote by $Sh_K^{\mu-\ord}=\mathit S_{K, [b]}$ and call it the {\it $\mu$-ordinary locus} of $Sh_K$. By \cite[Axiom 3]{HR}, it is an open subvariety of $Sh_K$.

\begin{theorem}\label{union}
We have $$Sh_K^{\mu-\ord}=\sqcup_{\mu' \in W_0(\mu);\ t^{\mu'} \in {}^K \tW \text{ and } t^{\mu'} \text{ is $\s$-straight}} EKOR_{K, t^{\mu'}}.$$

In particular, if $G$ is quasi-split, then $$Sh_K^{\mu-\ord}=\sqcup_{\mu' \in W_0^\s(\mu); t^{\mu'} \in {}^K \tW} EKOR_{K, t^{\mu'}}.$$
\end{theorem}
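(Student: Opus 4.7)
The plan is to exploit the compatibility between the EKOR and Newton stratifications built into the axiom set of \cite{HR}: each EKOR stratum $EKOR_{K,x}$ is contained in a single Newton stratum, namely the one attached to the $\s$-conjugacy class $[\dot x] \in B(G)$. Granting this, I would reduce the problem to identifying those $x \in \Adm(\{\mu\}) \cap {}^K\tW$ with $\dot x \in [b]$, where $[b]$ is the unique class in $B(G,\{\mu\})$ with Newton point $\mu^\diamond$.

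For the necessary direction, I would suppose $EKOR_{K,x} \subset Sh_K^{\mu-\ord}$, so $\brI\dot x\brI \cap [b] \neq \emptyset$. By Proposition \ref{leq}(1) one has $\ell(x) \ge \<\mu^\diamond, 2\rho\> = \ell(t^\mu)$; on the other hand, admissibility $x \le t^{x_0(\underline\mu)}$ for some $x_0 \in W_0$ forces $\ell(x) \le \ell(t^\mu)$. Thus $\ell(x) = \ell(t^\mu)$, and I would then rerun the length argument from the second half of the proof of Proposition \ref{ma-EKOR} (write the top element of $W_K t^{x_0(\underline\mu)} \cap {}^K\tW$ as $t^\lambda u$ with $u \in W_K$ and use $x \in {}^K\tW$) to conclude $x = t^{\mu'}$ for some $\mu' \in W_0(\mu)$. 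Proposition \ref{leq}(2) then gives that $t^{\mu'}$ is $\s$-straight.

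For the converse, I would take $\mu' \in W_0(\mu)$ with $t^{\mu'} \in {}^K\tW$ and $t^{\mu'}$ $\s$-straight. Straightness gives $\ell(t^{\mu'}) = \<\nu(\dot t^{\mu'}), 2\rho\>$, and standard translation-length facts give $\ell(t^{\mu'}) = \ell(t^\mu) = \<\mu^\diamond, 2\rho\>$. Since $t^{\mu'} \in \Adm(\{\mu\})$ we have $\nu(\dot t^{\mu'}) \le \mu^\diamond$ in the dominance order; combined with strict monotonicity of the pairing $\<\cdot, 2\rho\>$ on dominant cocharacters (since $\<\alpha_i^\vee, 2\rho\> = 2 > 0$ for every simple coroot), these force $\nu(\dot t^{\mu'}) = \mu^\diamond$. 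Hence $[\dot t^{\mu'}] = [b]$ and $EKOR_{K, t^{\mu'}} \subset Sh_K^{\mu-\ord}$. The quasi-split statement is then immediate from Example \ref{eg-1}(a), which identifies $\s$-straight translations indexed by $W_0(\mu)$ with those indexed by $W_0^\s(\mu)$.

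The main obstacle, if any, is the clean invocation of the EKOR--Newton compatibility; once that is in hand, the remainder is a tidy synthesis of Propositions \ref{leq} and \ref{ma-EKOR} together with Example \ref{eg-1}, so I expect no further technical difficulties.
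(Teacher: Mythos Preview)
Your proposal has a genuine gap at its first step. The claimed ``EKOR--Newton compatibility''---that each $EKOR_{K,x}$ lies entirely in the Newton stratum attached to $[\dot x]$---is \emph{false} in general and is not among the axioms of \cite{HR}. Section~\ref{non-qs} of this very paper furnishes explicit counterexamples (for $G=D^\times$ a division algebra of degree $8$): there one has a maximal EKOR stratum $EKOR_{K,t^{\mu'}}$ meeting $Sh_K^{\max}$ but with $\nu(t^{\mu'})=0\neq\nu([b]_{\max})$, so that single EKOR stratum meets more than one Newton stratum. Hence you cannot, as you propose, reduce the theorem to ``identify those $x$ with $\dot x\in[b]$''.

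The paper's proof circumvents this by starting from the weaker hypothesis $EKOR_{K,x}\cap Sh_K^{\mu-\ord}\neq\emptyset$ (not full containment). The commutative diagram of \cite[Axiom~4 \& \S6.1]{HR} relating $\upsilon_K$, $\Upsilon_K$ and $\delta_K$ then yields $\brI\dot x\brI\cap[b]\neq\emptyset$, after which the length argument forces $x=t^{\mu'}$ with $t^{\mu'}$ $\s$-straight. Only at that point---for these particular $x$---does Proposition~\ref{leq}(2) give $\brI t^{\mu'}\brI\subset[b]$, and the diagram then yields $EKOR_{K,t^{\mu'}}\subset Sh_K^{\mu-\ord}$. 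So containment of an EKOR stratum in a single Newton stratum is a \emph{conclusion} specific to straight $t^{\mu'}$, not an input valid for arbitrary $x$. Your converse direction and the quasi-split reduction via Example~\ref{eg-1}(a) are fine; note also that your detour through Proposition~\ref{ma-EKOR} is unnecessary, since $x\le t^{\mu'}$ together with $\ell(x)\ge\ell(t^{\mu'})$ already gives $x=t^{\mu'}$ directly.
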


\begin{proof} By \cite[Axiom 4 \& \S6.1]{HR}, we have the following commutative diagram \[\xymatrix{ & & \breve G(\breve\BQ_p)/\breve K_\sigma (\breve K_1 \times \breve K_1) \\  Sh_K  \ar[r]^-{\Upsilon_K} \ar@/^1pc/[urr]^{ \upsilon_K } \ar@/_1pc/[drr]_{ \delta_K } & G(\breve\BQ_p)/\breve K_\sigma \ar[ur]_{p_K} \ar[dr]^{d_K} & \\ & & B(G)}.\]

We have $$Sh_K^{\mu-\ord}=\sqcup_{x \in \Adm(\{\mu\}) \cap {}^K \tW} (Sh_K^{\mu-\ord} \cap EKOR_{K, x}).$$ If $EKOR_{K, x} \cap Sh_K^{\mu-\ord} \neq \emptyset$, then $\brK_\s \cdot \brI \dot x \brI\cap [b] \supset \Upsilon_K(EKOR_{K, x} \cap Sh_K^{\mu-\ord}) \neq \emptyset$. Therefore $\brI \dot x \brI \cap [b] \neq \emptyset$. By Proposition \ref{leq} (1), $\ell(x) \ge \<\mu, 2 \rho\>$. Since $x \in \Adm(\{\mu\})$, $x \le t^{\mu'}$ for some $\mu' \in W_0(\mu)$. Since $\ell(t^{\mu'})=\ell(t^\mu)=\<\mu, 2 \rho\>$, we must have $x=t^{\mu'}$ and $t^{\mu'} \in {}^K \tW$. By Proposition \ref{leq} (2), $t^{\mu'}$ is $\s$-straight.

On the other hand, suppose $\mu' \in W_0(\mu)$ such that $t^{\mu'}$ is a $\s$-straight element and $t^{\mu'} \in {}^K \tW$. By the same argument in Example \ref{eg-1} we have $\nu(t^{\mu'})=\mu^\diamond=\nu([b])$ and hence $[t^{\mu'}]=[b]$. By Proposition \ref{leq} (2), $\brI t^{\mu'} \brI \subset [b]$ and $\brK_\s \cdot \brI t^{\mu'} \brI \subset [b]$. So $\Upsilon_K(EKOR_{K, t^{\mu'}}) \subset \brK_\s \cdot \brI t^{\mu'} \brI \subset [b]$. Thus $EKOR_{K, t^{\mu'}} \subset \d_K \i([b])=Sh_K^{\mu-\ord}$.

The ``in particular'' part follows from Example \ref{eg-1} (a).
\end{proof}

\subsection{} Recall that $\tW=X_*(T)_{\G_0} \rtimes W_0$. Let $p: \tW \to W_0$ be the projection map. Let $W_\mu \subset W_0$ be the stabilizer of $\mu$. We have the following criteria for the density of the $\mu$-ordinary locus.

\begin{theorem}\label{density}
The following conditions are equivalent:

(1) The $\mu$-ordinary locus $Sh_K^{\mu-\ord}$ is dense in $Sh_K$;

(2) the $\mu$-ordinary locus $Sh_K^{\mu-\ord}$ is the union of all maximal EKOR strata;

(3) For any $\mu' \in W_0(\mu)$ with $t^{\mu'} \in {}^K \tW$, $t^{\mu'}$ is a $\s$-straight element in $\tW$.

If moreover, $G$ is quasi-split, then the above conditions are also equivalent to

(4) $p(W_K) W_0^\s W_\mu=W_0$.
\end{theorem}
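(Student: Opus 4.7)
The plan is to prove the chain $(1) \Leftrightarrow (2) \Leftrightarrow (3)$ using the EKOR stratification developed in Section~2, and then reduce $(3) \Leftrightarrow (4)$ in the quasi-split case to a group-theoretic statement about orbits.

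\textbf{Step 1: $(1) \Leftrightarrow (2)$.} Using Theorem~\ref{poset}, I would first observe that $EKOR_{K,x}$ is open in $Sh_K$ if and only if $x$ is maximal in $\Adm(\{\mu\}) \cap {}^K \tW$ for $\preceq_{K,\sigma}$: the closure $\overline{EKOR_{K,x'}}$ meets $EKOR_{K,x}$ only if $x \preceq_{K,\sigma} x'$, so maximality is equivalent to the complement of $EKOR_{K,x}$ being closed. By Axiom~3 the $\mu$-ordinary locus is open, and by Theorem~\ref{union} it is a disjoint union of EKOR strata. For $(1) \Rightarrow (2)$, each open (hence maximal) stratum must meet the dense $Sh_K^{\mu-\ord}$, and since the latter is a union of strata, it must in fact contain every maximal stratum; combined with Theorem~\ref{union} this forces the equality in (2). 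For $(2) \Rightarrow (1)$, every EKOR stratum lies in the closure of some maximal one by a finite-poset argument, so the union of all maximal strata is already dense.

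\textbf{Step 2: $(2) \Leftrightarrow (3)$.} This is immediate from Proposition~\ref{ma-EKOR} and Theorem~\ref{union}: the former identifies the maximal EKOR strata with $EKOR_{K,t^{\mu'}}$ for $\mu' \in W_0(\mu)$ satisfying $t^{\mu'} \in {}^K \tW$, while the latter expresses $Sh_K^{\mu-\ord}$ as the union over the subset of those $\mu'$ with $t^{\mu'}$ additionally $\sigma$-straight. Equality of the two collections is precisely (3).

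\textbf{Step 3: $(3) \Leftrightarrow (4)$ in the quasi-split case.} By Example~\ref{eg-1}(a), $t^{\mu'}$ is $\sigma$-straight iff $\mu' \in W_0^\sigma(\mu)$, so (3) becomes the inclusion $\{\mu' \in W_0(\mu) : t^{\mu'} \in {}^K \tW\} \subseteq W_0^\sigma(\mu)$. The key technical input I would establish is a lemma asserting that the assignment $\mu' \mapsto p(W_K)\mu'$ is a bijection
$$ \{\mu' \in W_0(\mu) : t^{\mu'} \in {}^K \tW\} \;\longrightarrow\; p(W_K) \backslash W_0(\mu) \;\cong\; p(W_K) \backslash W_0 / W_\mu. $$
Given this lemma, $(3) \Rightarrow (4)$ follows because for any $z \in W_0$ the distinguished $\mu' \in W_0^\sigma(\mu)$ lying in the $p(W_K)$-orbit of $z(\mu)$ yields $z \in p(W_K) W_0^\sigma W_\mu$. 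Conversely, for $(4) \Rightarrow (3)$, write $\mu' = u v(\mu)$ with $u \in p(W_K)$ and $v \in W_0^\sigma$; the $p(W_K)$-orbit of $\mu'$ then contains the $\sigma$-fixed point $v(\mu)$. Since $\sigma$ normalizes $p(W_K)$ (as $K$ is defined over $\BQ_p$) and preserves ${}^K \tW$, the element $\sigma(\mu')$ lies in the same orbit and still satisfies $t^{\sigma(\mu')} \in {}^K \tW$; the uniqueness clause of the key lemma forces $\sigma(\mu') = \mu'$.

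\textbf{Main obstacle.} The technical crux is the bijection lemma in Step~3. I expect its proof to proceed by describing $t^\lambda \in {}^K \tW$ (for $\lambda \in W_0(\mu)$) via explicit inequalities: $\langle \lambda, \alpha \rangle \ge 0$ for each finite simple root $\alpha$ whose reflection lies in $S_K$, together with an affine inequality on $\langle \lambda, \theta \rangle$ coming from an affine simple reflection in $S_K$ when present. The assertion that each $p(W_K)$-orbit on $W_0(\mu)$ meets the resulting region exactly once should then reduce, via standard Coxeter-theoretic arguments, to the classical fact that a finite Weyl-group orbit meets its closed fundamental chamber exactly once, now applied to the reflection subgroup $p(W_K) \subset W_0$, with a verification that the affine condition is essentially orbit-constant in the relevant range.
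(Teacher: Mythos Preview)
Your Steps 1 and 2, and the implication $(3)\Rightarrow(4)$ in Step 3, match the paper's argument essentially verbatim. Your bijection lemma is also correct and is exactly what the paper invokes (citing \cite[\S5.7]{GHN}) when it says that $t^{\mu'}\in{}^K\tW$ is equivalent to $\mu'$ lying in the fundamental chamber $C_{\underline K}$ for $p(W_K)$; so your ``main obstacle'' is less serious than you fear, and the affine inequality causes no extra trouble once one works with $p(W_K)$ as a finite reflection group centred at the barycenter of the $K$-facet.

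There is, however, a genuine gap in your $(4)\Rightarrow(3)$. You assert that $v(\mu)$ is $\sigma$-fixed and aim to conclude $\sigma(\mu')=\mu'$. Both steps implicitly assume $\sigma(\mu)=\mu$, which is false in general: for $G=GU_4$ with the nontrivial diagram involution and $\mu=\omega_1^\vee$ (Example~\ref{exm}), one has $\sigma(\mu)=\omega_3^\vee\neq\mu$. In that example condition~(4) holds even for $K=I$, yet $\sigma(\mu')\notin W_0(\mu)$ for $\mu'=\mu$, so your uniqueness argument cannot even be formulated. Note also that the desired conclusion is $\mu'\in W_0^\sigma(\mu)$ (via Example~\ref{eg-1}(a)), not $\sigma(\mu')=\mu'$; these differ precisely when $\sigma(\mu)\neq\mu$.

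The paper's fix is to compare $W_0$-chambers rather than points. After writing $\mu'=uw(\mu)$ with $u\in p(W_K)$, $w\in W_0^\sigma$, one adjusts $u$ within $p(W_K)$ so that the full chamber $uw(C)$ lies in $C_{\underline K}$ (this is possible, and does not change $\mu'$, by the same fundamental-domain property underlying your bijection lemma). Then one uses $\sigma(C)=C$ and $\sigma(w)=w$ and $\sigma(C_{\underline K})=C_{\underline K}$ to get $\sigma(u)w(C)\subset C_{\underline K}$; since $uw(C)$ and $\sigma(u)w(C)$ are $W_0$-chambers in $C_{\underline K}$ differing by $\sigma(u)u^{-1}\in p(W_K)$, uniqueness forces $\sigma(u)=u$, hence $uw\in W_0^\sigma$ and $\mu'\in W_0^\sigma(\mu)$. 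The point is that $\sigma(C)=C$ holds for any diagram automorphism, whereas $\sigma(\mu)=\mu$ does not.
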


\begin{remark}\label{rk}
Note that the conditions (3) and (4) are group-theoretic conditions and we will see below that for quasi-split groups, these two conditions are equivalent even if $(G, \mu)$ does not come from a Shimura variety.
\end{remark}

\begin{proof}
(1)$\Rightarrow$(2): Since $Sh_K^{\mu-\ord}$ is dense, $Sh_K^{\mu-\ord}$ intersects all the maximal EKOR strata. By Theorem \ref{union}, $Sh_K^{\mu-\ord}$ is a union of maximal EKOR strata. Therefore it must be the union of all the maximal EKOR strata.

(2)$\Rightarrow$(1): By Theorem \ref{poset}, the union of all maximal EKOR strata is dense in $Sh_K$. As $Sh_K^{\mu-\ord}$ is the union of all maximal EKOR strata, it is dense in $Sh_K$.

The equivalence of (2) and (3) follows from Theorem \ref{union}.

In the rest of the proof, we assume that $G$ is quasi-split.

(3)$\Rightarrow$(4): Let $z \in W_0$. Then there exists $u \in p(W_K)$ such that $t^{uz(\mu)} \in {}^K \tW$. So $t^{uz(\mu)}$ is $\s$-straight. In view of Example \ref{eg-1} we have $uz(\mu) \in W_0^\s(\mu)$, which means $z \in p(W_K) W_0^\s W_\mu$ as desired.

(4)$\Rightarrow$(3): Let $\mu' \in W_0(\mu)$ such that $t^{\mu'} \in {}^K \tW$, that is, $\mu'$ lies in the dominant Weyl chamber $C_{\underline K}$ for the set of positive roots in $K$ (see \cite[\S 5.7]{GHN}). Write $\mu'=u w(\mu)$ with $u \in p(W_K)$ and $w \in W_0^\s$. Via left multiplication by an appropriate element of $p(W_K)$ we can assume furthermore that $u w (C) \subset C_{\underline K}$, where $C$ denotes the dominant Weyl chamber for the set of positive roots in $G$. Thus $$\s(u) w (C)=\s(u) \s(w) (C) \subset \s(C_{\underline K})=C_{\underline K},$$ which means $\s(u) w (C)=u w(C)$ and hence $u=\s(u)$. So $\mu'=u w(\mu) \in W_0^\s(\mu)$. By Example \ref{eg-1}, $t^{\mu'}$ is $\s$-straight.
\end{proof}

In the case that $G$ is residually split, the action of $\s$ on $W_0$ is trivial. In this case, the condition (4) of Theorem \ref{density} is automatically satisfied. Thus

\begin{corollary}
If $G$ is residually split, then the $\mu$-ordinary locus $Sh_K^{\mu-\ord}$ is dense in $Sh_K$.
\end{corollary}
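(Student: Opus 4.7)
The plan is to derive this corollary directly from Theorem \ref{density}, specifically from condition (4). The strategy has two ingredients: first, the implication that residually split implies quasi-split, so that the equivalence with condition (4) in Theorem \ref{density} is applicable; second, the observation that residual splitness forces $\sigma$ to act trivially on the relative Weyl group $W_0$, which makes (4) trivial to check.

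First I would recall the standard fact that a residually split reductive group $G$ over $\mathbb{Q}_p$ is in particular quasi-split. (Residual splitness means that the maximal $\mathbb{Q}_p$-split torus and the maximal $\breve{\mathbb{Q}}_p$-split torus of $G$ have equal rank; equivalently, $\sigma$ acts trivially on the relative local Dynkin diagram. This in particular implies the existence of a $\mathbb{Q}_p$-rational Borel subgroup.) This puts us in the hypothesis of Theorem \ref{density}(4).

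Next, since $G$ is residually split, the Frobenius $\sigma$ acts trivially on $X_*(T)_{\Gamma_0}$ and on the root datum, hence on the relative Weyl group $W_0 = N(\breve{\mathbb{Q}}_p)/T(\breve{\mathbb{Q}}_p)$. Consequently $W_0^\sigma = W_0$, and condition (4) of Theorem \ref{density} reads
\[
p(W_K)\, W_0^\sigma\, W_\mu \;=\; p(W_K)\, W_0\, W_\mu \;\supseteq\; W_0,
\]
which is trivially satisfied. By Theorem \ref{density}, the $\mu$-ordinary locus $Sh_K^{\mu-\ord}$ is dense in $Sh_K$.

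There is essentially no obstacle in the argument itself; the only subtle point, which I would spell out carefully, is confirming that residual splitness really does give trivial action of $\sigma$ on $W_0$ (and not merely on the affine Weyl part). This follows from the fact that $\sigma$ acts on $\tilde W$ through its action on the apartment fixed by $\breve I$, and residual splitness means this action is trivial on the finite Weyl part of the local Dynkin diagram.
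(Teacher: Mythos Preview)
Your proof is correct and follows essentially the same approach as the paper: the paper also observes that for residually split $G$ the action of $\sigma$ on $W_0$ is trivial, so that condition (4) of Theorem \ref{density} is automatically satisfied. Your version is slightly more detailed in that you make explicit the implication ``residually split $\Rightarrow$ quasi-split'' needed to invoke condition (4), which the paper leaves implicit.
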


Also if $K$ is special maximal, then $p(W_K)=W_0$. Therefore,

\begin{corollary}
If $G$ is quasi-split and $K$ is a special maximal parahoric subgroup, then the $\mu$-ordinary locus $Sh_K^{\mu-\ord}$ is dense in $Sh_K$.
\end{corollary}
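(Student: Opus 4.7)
The plan is to read off the corollary as a direct consequence of Theorem~\ref{density}, whose equivalence of conditions~(1) and~(4) has already been established for quasi-split~$G$. Since~$G$ is quasi-split by hypothesis, it suffices to verify condition~(4), namely
\[
p(W_K)\, W_0^{\sigma}\, W_\mu = W_0,
\]
and then invoke Theorem~\ref{density} to conclude that $Sh_K^{\mu-\ord}$ is dense.

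Verifying~(4) reduces to the assertion that $p(W_K) = W_0$ whenever $K$ is a special maximal parahoric subgroup. This is a standard fact about Bruhat--Tits theory: recall from the setup in the paper that the Iwahori--Weyl group $\tilde W$ is a split extension of~$W_0$ by~$X_*(T)_{\Gamma_0}$ after the choice of a special vertex, and $W_K$ is the finite Weyl group of the parahoric~$K$ viewed as a subgroup of~$\tilde W$. When~$K$ corresponds to a special vertex, $W_K$ is precisely a lift of the full relative Weyl group~$W_0$ under the projection $p \colon \tilde W \to W_0$, so $p(W_K) = W_0$. Once this is in hand, clearly $p(W_K)\, W_0^\sigma\, W_\mu \supseteq W_0 \cdot \{1\} \cdot \{1\} = W_0$, while the reverse inclusion is tautological, so~(4) holds.

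With~(4) verified, Theorem~\ref{density} immediately yields condition~(1), that is, the density of $Sh_K^{\mu-\ord}$ in $Sh_K$. The only nontrivial ingredient of the argument is the structural fact $p(W_K) = W_0$ for special maximal~$K$; everything else is a direct reference to Theorem~\ref{density}. Since the paper explicitly states ``Also if $K$ is special maximal, then $p(W_K)=W_0$'' as the motivation, the proof can be given in a single line.
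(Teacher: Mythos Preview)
Your proposal is correct and follows essentially the same approach as the paper: the paper simply notes that $p(W_K)=W_0$ when $K$ is special maximal, which makes condition~(4) of Theorem~\ref{density} trivially satisfied, and the corollary follows. Your write-up merely spells out this one-line observation in slightly more detail.
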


\begin{example} \label{exm}
Consider the case where $G=GU_4$. Here $\s$ induces the nontrivial diagram automorphism on the relative Weyl group $W_0=S_4$. We label the simple reflections in $W_0$ by $s_1, s_2, s_3$ and the unique affine simple reflection that is not in $W_0$ by $s_0$ in the usual way.  By the criterion in Theorem \ref{density}, it is easy to see that for $\mu=\o^\vee_1$ or $\o_3^\vee$, $Sh_K^{\mu-\ord}$ is dense for any parahoric $K$; for $\mu=\o_2^\vee$, $Sh_K^{\mu-\ord}$ is dense if and only if $\{s_1, s_3\} \subset W_K$.

\end{example}

\subsection{} By Theorem \ref{density}, the $\mu$-ordinary locus is dense if and only if

(*) $t^{\mu'}$ is $\s$-straight for any $\mu' \in W_0(\mu)$.

In the rest of this section, we study in details when the condition (*) holds. Note that this is a group-theoretic question and we do not assume in below that $(G, \mu)$ comes from a Shimura variety.

Let $\tW^{\ad}$ be the Iwahori-Weyl group of the adjoint group $G^{\ad}$ of $G$ and $\tW \to \tW^{\ad}$ be the natural map. We denote the automorphism of $\tW^{\ad}$ induced by $\s$ again by $\s$. By definition, $w \in \tW$ is $\s$-straight if and only if its image in $\tW^{\ad}$ is $\s$-straight. Therefore, it suffices to study the condition (*) for adjoint groups.

Now assume that $G$ is adjoint. We may decompose $G$ as $G \cong G_1 \times \cdots \times G_r$, where each $G_i$ is adjoint and simple over $\BQ_p$. We then have the decompositions $\tW \cong \tW_1 \times \cdots \times \tW_r$ and $W_0 \cong W_{0, 1} \times \cdots \times W_{0, r}$, where $\tW_i$ is the Iwahori-Weyl group of $G_i$ and $W_{0, i}$ is the relative Weyl group of $G_i$. We write $\mu$ as $\mu=(\mu_1, \cdots, \mu_r)$. Then it is easy to see that $t^{\mu'}$ is a $\s$-straight element of $\tW$ for any $\mu' \in W_0(\mu)$ if and only if for any $i$, $t^{\mu'_i}$ is a $\s$-straight element of $\tW_i$ for any $\mu'_i \in W_{0, i}(\mu_i)$. Thus, it suffices to study the condition (*) for adjoint, simple groups over $\BQ_p$.

As explained in \cite[\S 2.4]{GHN}, if $G$ is adjoint and simple over $\BQ_p$, then the action of $\s$ on the set of connected components of the associated affine Dynkin diagram is transitive. We first discuss the case where $G$ is quasi-simple over $\breve \BQ_p$. This assumption is equivalent to the assumption that the affine Dynkin diagram of $\tW$ is connected. For any vertex $i$ of the Dynkin diagram, we denote by $\a_i$ and $\o^\vee_i$ the corresponding simple root and fundamental coweight respectively.

\begin{proposition}\label{sh-I}
We use the same labeling of the Dynkin diagram as in \cite{Bour}. Suppose that $G$ is quasi-simple over $\breve \BQ_p$ and $\mu \neq 0$. Then the condition (*) holds if and only if $G$ is quasi-split and $(G, \mu, \s)$ is in one of the following cases:

\begin{itemize}
\item The action of $\s$ on $W_0$ is trivial (and there is no further conditions on $\mu$);

\item The group $G$ is of type $A_n$ with $n$ odd, the action of $\s$ on $W_0$ is of order $2$, and $\mu=m \o_i^\vee$, where $m \in \BZ_{\ge 1}$ and $i=1$ or $n$;

\item The group $G$ is of type $D_n$, the action of $\s$ on $W_0$ is of order $2$, and $\mu=m \o_i^\vee$, where $m \in \BZ_{\ge 1}$ and $i$ is a vertex in the Dynkin diagram not fixed by $\s$.
\end{itemize}
\end{proposition}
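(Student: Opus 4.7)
The approach is to translate condition $(*)$ into a purely group-theoretic statement and then to perform a Dynkin-type case analysis. The first step is to assume $G$ is quasi-split: by Example~\ref{eg-1}(a), $t^{\mu'}$ is $\sigma$-straight if and only if $\mu'\in W_0^\sigma(\mu)$, so $(*)$ becomes the orbit equality $W_0(\mu)=W_0^\sigma(\mu)$, equivalently $W_0=W_0^\sigma W_\mu$. When $\sigma$ acts trivially on $W_0$ this holds for every $\mu$, giving the first case of the statement.

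Suppose instead that $\sigma$ acts nontrivially on $W_0$. Since $G$ is quasi-simple over $\breve\BQ_p$, the Dynkin diagram is irreducible, and the only types admitting a nontrivial diagram automorphism are $A_n$ $(n\ge 2)$, $D_n$ $(n\ge 4)$, and $E_6$. I would realize $W_0$ and $W_0^\sigma$ concretely and compare the orbits of $\mu=\sum a_i\omega_i^\vee$. In type $A_n$, $W_0=S_{n+1}$ with $\sigma$ the involution $i\mapsto n+2-i$, and $W_0^\sigma$ is a hyperoctahedral Weyl group (type $B$ or $C$ according to the parity of $n$). The $W_0^\sigma$-orbits on $W_0\mu$ are separated by the palindromic type of the underlying subsets; matching these against the single $W_0$-orbit forces $n$ odd and $\mu$ a multiple of $\omega_1^\vee$ or $\omega_n^\vee$ (for $n$ even, even $\mu=\omega_1^\vee$ fails because the fixed vertex splits 1-subsets into two $W_0^\sigma$-orbits). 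In type $D_n$ with the order-two flip, $W_0^\sigma\cong W(B_{n-1})$ and an analogous analysis on signed subsets of $\{1,\dots,n\}$ forces $\mu$ to be a multiple of $\omega_{n-1}^\vee$ or $\omega_n^\vee$. For $D_4$-triality (where $W_0^\sigma=W(G_2)$, of index $16$) and for $E_6$ (where $W_0^\sigma=W(F_4)$, of index $45$), a divisibility check on $|W_0\mu|$ against $|W_0^\sigma|$ rules out every nonzero fundamental coweight; e.g. for $\mu=\omega_1^\vee$ in $E_6$ one has $|W_0\mu|=27$, but $1152/27$ is not an integer.

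Finally, I would show $(*)$ must fail whenever $G$ is not quasi-split. Such a $G$ is a nontrivial inner form of its quasi-split model, so $\sigma$ acts on $\tW$ as the quasi-split Frobenius twisted by conjugation with a nontrivial length-zero element $\tau$. Since $\ell(t^\mu)=\langle\mu,2\rho\rangle=\langle\mu^\diamond,2\rho\rangle$, $\sigma$-straightness of $t^\mu$ is equivalent to $\nu(t^\mu)=\mu^\diamond$; and via an explicit Newton-point computation of the type illustrated in Example~\ref{eg-11} for inner forms of $\GL_n$, together with its analog for type-$D$ inner forms, one produces a $\mu'\in W_0(\mu)$ with $\nu(t^{\mu'})<\mu^\diamond$, violating $\sigma$-straightness.

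The hardest step will be this last one: making the Newton-point computation uniform across all simple inner forms is technically delicate, since one must track the interaction of $W_0$-translates of $\mu$ with the length-zero twist $\tau$ (and in particular identify which $\mu'$ fails to land in a $\sigma$-straight conjugacy class). The combinatorial orbit analysis in the exceptional cases $D_4$-triality and $E_6$ is routine once the orbit sizes and stabilizer orders are written out, but still demands separate bookkeeping.
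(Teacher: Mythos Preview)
Your quasi-split reduction to $W_0=W_0^\sigma W_\mu$ is exactly the paper's, and your case analysis is essentially the counting criterion
\[
\frac{|W_0^\sigma|}{|W_0^\sigma\cap W_\mu|}=\frac{|W_0|}{|W_\mu|}
\]
that the paper checks type by type. One correction: for $E_6$ with $\mu=\omega_2^\vee$ the orbit size is $72$, which \emph{does} divide $|W(F_4)|=1152$, so divisibility alone does not rule it out; you must actually compute $|W_0^\sigma\cap W_\mu|=|W(A_5)^\sigma|=|W(C_3)|=48$ and observe $1152/48=24\neq 72$.

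For the non-quasi-split case your sketch is incomplete, and the route you propose (Newton-point computations in the style of Example~\ref{eg-11}) is harder than necessary and does not obviously cover all types. The paper's argument is more efficient because it replaces Newton points by the elementary chamber criterion: $t^{\mu'}$ is $\sigma$-straight iff all iterates $p(\sigma)^i(\mu')$ lie in the closure of a single Weyl chamber, where $p(\sigma)$ is the linear part of $\sigma$. This immediately dispatches the largest case, namely the pure inner twist $\sigma=\Ad(\tau)$ with $\tau\in\Omega$ nontrivial (which occurs in \emph{every} type with nontrivial $\Omega$, not just $A$ and $D$): here $p(\sigma)=w_0w_0^L$ for a proper Levi $L$, so $p(\sigma)\notin W_\mu$, whence $p(\sigma)(\mu)\neq\mu$ and $t^\mu$ itself already fails straightness. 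The remaining mixed cases ($A_n$ with $n$ even and $\sigma=\Ad(\tau_1)\circ\sigma_0$; $D_n$ with $\sigma=\Ad(\tau_n)\circ\sigma_0$) are then handled by producing an explicit $\mu'$ and a root $\alpha$ with $\langle\mu',\alpha\rangle\langle p(\sigma)(\mu'),\alpha\rangle<0$. Your Newton-point strategy would eventually get there, but the chamber criterion short-circuits the computation and makes the ``uniform across all simple inner forms'' step you flagged as hardest essentially a one-liner in the main case.
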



\begin{proof}
We first assume $G$ is quasi-split. By Theorem \ref{density} and Remark \ref{rk}, the condition (*) is equivalent to the condition that $W_0^\s W_\mu=W_0$. Note that $W_0^\s$ is again a Weyl group, $W_\mu$ is a standard parabolic subgroup of $W_0$ and $W_0^\s \cap W_\mu$ is a standard parabolic subgroup of $W_0^\s$.

It is easy to see that $W_0^\s W_\mu=W_0$ if and only if \[\tag{a} \frac{\sharp W_0^\s}{\sharp W_0^\s \cap W_\mu}=\frac{\sharp W_0}{\sharp W_\mu}.\] The cardinality of finite Weyl groups are well-known (see, e.g \cite{Bour}). One may check case-by-case that the equality (a) holds exactly in the cases listed in the Proposition.

Now we assume $G$ is not quasi-split. As we discussed above, we may and do assume that $G$ is adjoint. If $\o^\vee_i$ is minuscule, we denote by $\t_i$ the corresponding length-zero element in $t^{\o_i^\vee} W_0$. Then up to equivalences, one of the following cases occurs:
\begin{enumerate}[label=(\roman*)]

\item $\s=\Ad(\t)$ for some nontrivial length zero element $\t$ in $\tW$;

\item $W_0$ is of type $A_n$ with $n$ even and $\s=\Ad(\t_1) \circ \s_0$ with $\s_0$ nontrivial;

\item $W_0$ is of type $D_n$ and $\s=\Ad(\t_n) \circ \s_0$ with $\s_0$ a nontrivial involution.
\end{enumerate}

By definition, $t^{\mu'}$ is $\s$-straight if and only if $p(\s)^i(\mu')$ for $i \in \BZ$ are in the closure of a single Weyl chamber, where $p(\s) \in W_0 \rtimes \<\s_0\> \subset GL(X_*(T)_\G \otimes \BR)$ is the linear part of the affine transformation $\s$ on $X_*(T)_\G \otimes \BR$. In particular, if $t^{\mu'}$ is $\s$-straight, then $\<\mu', \a\> \<p(\s)(\mu'), \a\> \ge 0$ for any root $\a$.

In Case (i), we have $p(\s)=w_0 w_0^L$, where $w_0$ is the longest element in $W_0$ and $w_0^L$ is the longest element in a proper standard parabolic subgroup of $W_0$. In particular, $p(\s) \in W_0 - W_{\mu}$ and $p(\s)(\mu) \neq \mu$. Therefore $p(\s)(\mu), \mu \in W_0(\mu)$ are not in the closure of a single Weyl chamber and hence $t^{\mu}$ is not $\s$-straight.

In Case (ii), we have $p(\s)(\a_1)=-\th$ and $p(\s)(\th)=-\a_1$, where $\th$ is the highest root. Suppose that $\mu=(a_1, \cdots, a_n)$ with $a_1+\cdots+a_n=0$. As $\mu$ is dominant and $\mu \neq 0$, we have $a_1 \ge \cdots \ge a_n$ and $a_1 \neq a_n$. If $a_1>a_{n-1}$, then we set $\mu'=(a_1, a_{n-1}, a_2, \cdots, a_{n-2}, a_n)$. So $\<\mu', \th\>, \<\mu', \a_1\> >0$ and hence $\<\mu', \th\> \<p(\s)(\mu'), \th\><0$. If $a_1=a_{n-1}$, then we set $\mu'=(a_n, a_1, \cdots, a_1)$. So $\<\mu', \th\>, \<\mu', \a_1\> <0$ and hence $\<\mu', \th\> \<p(\s)(\mu'), \th\><0$. In either case, $\mu'$ is not $\s$-straight.

In case (iii), we have $p(\s)(\b)=-\b$, where $\b=\a_2+\cdots+\a_n$. Let $w \in W_0$ such that $w(\b)$ is the highest root. Since $\mu \neq 0$, $\<\mu, w(\b)\>>0$. Set $\mu'=w \i(\mu)$. Then $\<\mu', \b\>\<p(\s)(\mu'), \b\> < 0$ and $t^{\mu'}$ is not $\s$-straight.
\end{proof}

\subsection{} Now we discuss the case where $G$ is adjoint and simple over $\BQ_p$. As we mentioned before, the action of $\s$ on the set of connected components of the associated affine Dynkin diagram is transitive. We may write $\tW$ as $\tW=\tW_1 \times \cdots \times \tW_m$, where $\tW_1 \cong \cdots \cong \tW_m$ with connected affine Dynkin diagram and $\s(\tW_1)=\tW_2, \cdots, \s(\tW_m)=\tW_1$. Let $\mu'=(\mu'_1, \cdots, \mu'_m)$. If $t^{\mu'}$ is a $\s$-straight element in $\tW$, then $t^{\sum_{i=1}^m \s^{m-i}(\mu'_i)}$ is a $\s^m$-straight element of $\tW_m$. By Proposition \ref{sh-I}, if the condition (*) holds for $(G, \mu)$, then $\s^m$ stabilizes the relative Weyl group $W_{0, m}$ of $\tW_m$. Then after choosing a suitable special vertex, we may assume that $\s$ stabilizes the relative Weyl group of $\tW$. Now by Theorem \ref{density} (4), the condition (*) holds if and only if $W_0^\s W_\mu=W_0$.

If there exist $i \neq j$ such that $\mu_i \neq 0$ and $\mu_j \neq 0$. Then the condition $W_0^\s W_\mu=W_0$ implies that $W_1 W_2=W_{0, j}$, where $W_1$ is the stabilizer of $\s^{j-i}(\mu_i)$ in $W_{0, j}$ and $W_2$ is the stabilizer of $\mu_j$ in $W_{0, j}$. Since $\mu_i, \mu_j \neq 0$, $W_1$ and $W_2$ are proper standard parabolic subgroups of $W_{0, j}$. It is easy to check that $W_1 W_2 \subsetneqq W_{0, j}$. Thus $W_0^\s W_\mu \neq W_0$.

If there is only one $i$ such that $\mu_i \neq 0$, then the condition $W_0^\s W_\mu=W_0$ is equivalent to the condition $W_{0, i}^{\s^m} W_{i, \mu_i}=W_{0, i}$, where $W_{i, \mu_i}$ is the stabilizer of $\mu_i$ in $W_{0, i}$. The latter condition is studied in Proposition \ref{sh-I} and it happens exactly in one of the three listed cases there.

\subsection{} Now we apply the above criterion to the symplectic Shimura varieties with Iwahori level structure. Let $G=\text{Res}_{E/\BQ_p} Sp_{2n}$, where $E$ is a finite extension of $\BQ_p$. Then the Iwahori-Weyl group of $G$ is $\tW=\tW_1 \times \cdots \times \tW_m$, where $\tW_1 \cong \tW_m$ is the affine Weyl group of type $C_n$ and $m$ is the residue degree of the extension $E/\BQ_p$. The action of $\s$ permutes $\tW_1, \cdots, \tW_m$ and $\s^m$ is the identity map on $\tW$. The dominant coweight $\mu$ can be written as $(r \o^\vee_n, \cdots, r \o^\vee_n)$, where $r$ is the ramification index of the extension $E/\BQ_p$. By the above discussion, the $\mu$-ordinary locus $Sh_I^{\mu-\ord}$ is dense if and only if $m=1$, i.e., the extension $E/\BQ_p$ is totally ramified. This result is first obtained by Hartwig in \cite[Theorem 1.1.1]{Ha}.

\section{Maximal Newton stratum}\label{non-qs}

\subsection{} In this section, we assume that there is no $\s$-conjugacy class $[b] \in B(G, \{\mu\})$ such that $\nu([b])=\mu^\diamond$. The main result of \cite{HN} shows that there still exists a unique maximal element $[b]_{\max}$ in $B(G, \{\mu\})$. However, the explicit description of such maximal element is much more complicated. We give an example below.

\begin{example} \label{ex}
Consider the case where $G=D^\times$, where $D$ is a central division algebra of degree $8$. In this case $\tW=\BZ^8 \rtimes S_8$. Let $\t_1$ be the unique length zero element in $t^{\o_1^\vee} W_0$. Then the induced action of $\s$ on $\tW$ equals $\Ad(\t_1^i)$ for some $i \in \BZ$ coprime to $8$. We adopt the usual labeling of the Dynkin diagram of $S_8$ by $1 \le i \le 7$, and denote by $\a_i$ and $\o_i^\vee$ the corresponding simple roots and fundamental coweight respectively.

Let $\mu=\o_5^\vee$ and let $[b]_{\max}$ be the unique maximal element in  $B(G, \{\mu\})$. By the discussion in Example \ref{eg-11}, $\nu([b]_{\max})<\mu$. We may compute $\nu([b]_{\max})$ explicitly. If $\s=\Ad(\t_1)$, then $\nu([b]_{\max})=\frac{2}{3} \o_5^\vee$, and one of the $\s$-straight representatives in $[b]_{\max} \cap \Adm(\mu)$ is $t^{s_{\a_1+\cdots+\a_5}(\mu)} s_{\a_6} \cdots s_{\a_2}$. If $\s=\Ad(\t_1^3)$ then $\nu([b]_{\max})=\frac{1}{3} \o_3^\vee + \frac{1}{2} \o_6^\vee$, and one of the $\s$-straight representatives in $[b]_{\max} \cap \Adm(\mu)$ is $t^{s_{\a_1+\cdots+\a_6}(\mu)} s_{\a_4+\a_5} s_{\a_5} s_{s_6+\a_7}s_{\a_3+\a_4+\a_5} $.
\end{example}

\subsection{} We denote by $Sh_K^{\max}=S_{K, [b]_{\max}}$ the maximal Newton stratum of $Sh_K$. By \cite[Axiom 3 and Theorem 5.4]{HR}, $Sh_K^{\max}$ is a nonempty open subset of $Sh_K$. Contrast to the $\mu$-ordinary locus, the behavior of the maximal Newton stratum is very different from what we have seen in section 2. 

\begin{proposition}
Suppose that $\nu([b]_{\max}) \neq \mu^\diamond$. Then the maximal Newton stratum contains some non-maximal EKOR strata.
\end{proposition}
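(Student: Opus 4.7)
The plan is to identify a non-maximal EKOR stratum $EKOR_{K,\tilde w}$ with $\tilde w \in {}^K\tW$ that is entirely contained in $Sh_K^{\max}$. The strategy mirrors the second half of the proof of Theorem~\ref{union}: the $\s$-straight translation element $t^{\mu'}$ used there is replaced by a shorter $\s$-straight element whose $\s$-conjugacy class is $[b]_{\max}$ rather than an element whose Newton point is $\mu^\diamond$.

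First I would produce a $\s$-straight element $\tilde w \in \Adm(\{\mu\}) \cap {}^K\tW$ whose $\s$-conjugacy class in $G(\breve\BQ_p)$ is $[b]_{\max}$. Existence of some $\s$-straight representative of $[b]_{\max}$ inside $\Adm(\{\mu\})$ follows from Theorem~\ref{s-str} together with the argument used in the proof of Proposition~\ref{leq}. To arrange that this representative lies in ${}^K\tW$, one invokes the theory of minimum-length elements in $W_K$-$\s$-conjugacy classes of $\tW$ (cf.\ \cite{He-07}), which allows a $W_K$-$\s$-conjugation within the straight class corresponding to $[b]_{\max}$ while preserving $\s$-straightness; Theorem~\ref{comp} guarantees that the resulting $\tilde w$ still lies in $\Adm(\{\mu\})$.

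Given such $\tilde w$, straightness gives $\ell(\tilde w) = \langle \nu([b]_{\max}), 2\rho\rangle$. The hypothesis $\nu([b]_{\max}) \neq \mu^\diamond$, combined with $\nu([b]_{\max}) \le \mu^\diamond$, yields the strict inequality
\[
\ell(\tilde w) < \langle \mu^\diamond, 2\rho\rangle = \langle \mu, 2\rho\rangle = \ell(t^\mu),
\]
so $\tilde w \neq t^{\mu'}$ for any $\mu' \in W_0(\mu)$. By Proposition~\ref{ma-EKOR}, $EKOR_{K,\tilde w}$ is therefore a non-maximal EKOR stratum. Proposition~\ref{leq}(2) applied to $\tilde w$ gives $\brI \dot{\tilde w}\brI \subset [b]_{\max}$, hence $\brK_\s \cdot \brI \dot{\tilde w}\brI \subset [b]_{\max}$. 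The commutative diagram preceding Theorem~\ref{union} then yields $\Upsilon_K(EKOR_{K,\tilde w}) \subset \brK_\s \cdot \brI \dot{\tilde w}\brI \subset [b]_{\max}$, whence $EKOR_{K,\tilde w} \subset \delta_K\i([b]_{\max}) = Sh_K^{\max}$, finishing the proof.

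The main obstacle is the first step: arranging that $\tilde w \in {}^K\tW$. Existence of $\s$-straight elements in $[b]_{\max} \cap \Adm(\{\mu\})$ is standard, but ensuring the representative actually lies in ${}^K\tW$ requires a finer analysis of minimum-length reductions under $W_K$-$\s$-conjugation within the straight $\s$-conjugacy class corresponding to $[b]_{\max}$. If this step proves delicate, a fallback is to settle for the weaker conclusion that $Sh_K^{\max}$ has nonempty intersection with some non-maximal EKOR stratum, which can be extracted more directly from \cite[Axiom~4]{HR} applied to any lift to $Sh_K$ of a $\s$-straight representative of $[b]_{\max}$, regardless of whether it lies in ${}^K\tW$.
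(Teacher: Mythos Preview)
Your approach is essentially the same as the paper's. The paper dispatches your ``main obstacle'' by a direct citation: the existence of a $\sigma$-straight element $x \in \Adm(\{\mu\}) \cap {}^K\tW$ with $\dot x \in [b]_{\max}$ is exactly \cite[Theorem~6.17]{He-KR}, so no ad hoc $W_K$-$\sigma$-conjugation argument is needed. For the inclusion $EKOR_{K,x} \subset Sh_K^{\max}$ the paper cites the proof of \cite[Theorem~6.18]{He-KR}, which amounts to the same commutative-diagram argument you spell out via Proposition~\ref{leq}(2); and the non-maximality of $EKOR_{K,x}$ is deduced, as you do, from $\ell(x)=\langle \nu([b]_{\max}),2\rho\rangle<\langle\mu,2\rho\rangle$ together with Proposition~\ref{ma-EKOR}.
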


\begin{proof}
By \cite[Theorem 6.17]{He-KR}, there exists a straight element $x$ in $\Adm(\{\mu\}) \cap {}^K \tW$ with $\dot x \in [b]_{\max}$. By the proof of \cite[Theorem 6.18]{He-KR}, $EKOR_{K, x} \subset Sh_K^{\max}$.

By definition, we also have $\<\bar \nu_x, 2 \rho\> =\<\nu([b]_{\max}), 2 \rho\> < \<\mu, 2 \rho\>$. In particular, $x$ is not a maximal length element in $\Adm(\{\mu\}) \cap {}^K \tW$. By Proposition \ref{ma-EKOR}, $EKOR_{K, x}$ is not a maximal EKOR stratum.
\end{proof}

\subsection{}
Another difference is that in general the maximal Newton stratum is not a union of EKOR strata.

\begin{example}
We continue with Example \ref{ex}. Since $Sh_K^{\max}$ is open, there exists some $\mu' \in W_0(\mu)$ such that $Sh_K^{\max} \cap EKOR_{t^{\mu'}} \neq \emptyset$. However, one computes directly that $\nu(t^{\mu'})=0$. This means $EKOR_{t^{\mu'}} \not \subset Sh_K^{\max}$.
\end{example}

\subsection{} We fix an element $b \in [b]_{\max}$ and we  consider the affine Deligne-Lusztig variety $$X(\mu, b)_K=\{g \in G(\breve \BQ_p)/\breve K; g \i b \s(g) \in\cup_{w \in \Adm(\{\mu\})_K} \breve K \dot w \breve K\}.$$ It is the $\overline{\BF}_p$-valued points of a perfect scheme in the sense of Zhu \cite{Zhu} and Bhatt-Scholze \cite{BS}, a locally closed perfect subscheme of the $p$-adic partial flag variety. As explained in \cite[\S 6.2]{GHN}, the map $\Upsilon_K$ maps the maximal Newton stratum to $\BJ_b \backslash X(\mu, b)_K$, where $\BJ_b$ is the $\s$-centralizer of $b$ in $G(\breve \BQ_p)$.

We also consider the group-theoretic analogy of Newton strata and EKOR strata as in \cite[\S 2.12]{He-CDM}.

Let $M_K=\cup_{w \in \Adm(\{\mu\})_K} \breve K \dot w \breve K$ be the analogy of $Sh_K$. For any $x \in \Adm(\{\mu\})^K \cap {}^K \tW$, let $\breve K_\s \cdot \breve I \dot x \breve I$ be the analogy of the EKOR stratum $EKOR_{K, x}$. For any $[b] \in B(G, \{\mu\})$, let $[b] \cap M_K$ be the analogy of Newton stratum $\mathit S_{K,[b]}$. By \cite[\S 2.5]{He-CDM}, all the subsets of $G(\breve \BQ_p)$ here are admissible in the sense of \cite[\S 2.10]{GHKR}. In particular, one may define their dimension $\dim_K$ and their closure in $M_K$.

By \cite[Theorem 2.40]{He-CDM}, $[b]_{\max} \cap M_K$ is open in $M_K$. Thus $\dim_K([b]_{\max} \cap M_K)=\dim_K(M_K)=\<\mu, 2 \rho\>$. Hence by \cite[Theorem 2.33]{He-CDM}, $\dim X(\mu, b)_K=\dim_K(M_K)-\<\nu([b]_{\max}), 2 \rho\>$. Hence

(a) If $\nu([b]_{\max}) \neq \mu^\diamond$, then $\dim X(\mu, b)_K>0$.

\smallskip

Next we discuss the density of the maximal Newton stratum. We give a criterion for the group-theoretic analogous question.

\begin{proposition}\label{gp-den}
The following conditions are equivalent:

(1) The intersection $[b]_{\max} \cap M_K$ is dense in $M_K$;

(2) For any $\mu'$ in the $W_0$-orbit of $\mu$ with $t^{\mu'} \in {}^K \tW$, we have $[b]_{\max} \cap \breve I t^{\mu'} \breve I \neq \emptyset$.
\end{proposition}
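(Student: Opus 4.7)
The plan is to identify the maximal ``EKOR-type'' strata inside $M_K$ and then exploit the $\sigma$-conjugation invariance of the class $[b]_{\max}$ to pass from the full EKOR-type stratum $\breve K_\sigma\cdot\breve I\dot t^{\mu'}\breve I$ down to the single Iwahori double coset $\breve I\dot t^{\mu'}\breve I$.

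First, following \cite[\S 6.1]{HR} together with the combinatorial Theorem \ref{comp}, I would introduce, for each $x\in\Adm(\{\mu\})\cap{}^K\tW$, the subset $\breve K_\sigma\cdot\breve I\dot x\breve I\subset M_K$ as the group-theoretic analogue of $EKOR_{K,x}$. The closure relation in Theorem \ref{poset}, whose proof is purely combinatorial in $\tW$, then reads
\[
\overline{\breve K_\sigma\cdot\breve I\dot x\breve I}\;=\;\bigcup_{x'\preceq_{K,\sigma} x}\breve K_\sigma\cdot\breve I\dot x'\breve I.
\]
By (the proof of) Proposition \ref{ma-EKOR}, the maximal elements of $\Adm(\{\mu\})\cap{}^K\tW$ with respect to $\preceq_{K,\sigma}$ are precisely the $t^{\mu'}$ with $\mu'\in W_0(\mu)$ and $t^{\mu'}\in{}^K\tW$; the corresponding strata $\breve K_\sigma\cdot\breve I\dot t^{\mu'}\breve I$ are therefore open in $M_K$, and their union is open and dense in $M_K$.

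Second, by \cite[Theorem 2.40]{He-CDM} the intersection $[b]_{\max}\cap M_K$ is open in $M_K$. Combining this with the previous paragraph, $[b]_{\max}\cap M_K$ is dense in $M_K$ if and only if it meets every maximal stratum $\breve K_\sigma\cdot\breve I\dot t^{\mu'}\breve I$: the ``only if'' is the fact that a dense open meets every nonempty open, and the ``if'' uses that each such stratum is irreducible in the admissible sense (as the image of the irreducible $\breve K\times\breve I\dot t^{\mu'}\breve I$ under the $\sigma$-conjugation map), so any nonempty open intersection with $[b]_{\max}$ is automatically dense in the stratum.

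Third, since $[b]_{\max}$ is a $\sigma$-conjugacy class it is stable under $\sigma$-conjugation by $G(\breve\BQ_p)$, hence \emph{a fortiori} under the $\breve K_\sigma$-action, so
\[
[b]_{\max}\cap\breve K_\sigma\cdot\breve I\dot t^{\mu'}\breve I\neq\emptyset\;\Longleftrightarrow\;[b]_{\max}\cap\breve I\dot t^{\mu'}\breve I\neq\emptyset,
\]
which matches the meeting criterion with condition (2). Combined with the previous step, this yields (1)$\Leftrightarrow$(2). The main, though mild, obstacle is to verify the scheme-theoretic statements used in the second step — openness of $[b]_{\max}\cap M_K$, the closure relation for the $\breve K_\sigma$-orbits, and the irreducibility of each $\breve K_\sigma\cdot\breve I\dot t^{\mu'}\breve I$ — inside the admissible-subset framework of \cite{GHKR,He-CDM}; once these are in place, the rest of the argument is purely formal.
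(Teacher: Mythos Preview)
Your proposal is correct and follows essentially the same route as the paper: identify the maximal group-theoretic EKOR strata $\breve K_\sigma\cdot\breve I\dot t^{\mu'}\breve I$ as open irreducible pieces whose closures cover $M_K$, use openness of $[b]_{\max}\cap M_K$ from \cite[Theorem~2.40]{He-CDM}, and reduce to the Iwahori double coset via $\breve K_\sigma$-stability of $[b]_{\max}$. One small difference worth noting: for (2)$\Rightarrow$(1) the paper invokes the Newton decomposition $M_K=\bigsqcup_{[b']}([b']\cap M_K)$ from \cite[Theorem~3.29]{He-CDM} to find a generically dense $[b']$ in each maximal stratum and then argues $[b']=[b]_{\max}$ via maximality and the closure relation, whereas you bypass this by using directly that an open subset meeting an irreducible set is dense in it---your version is slightly more economical and avoids the extra citation.
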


\begin{remark}
The condition (2) is equivalent to the condition that $[b]_{\max} \cap \breve K_\s \cdot \breve I t^{\mu'} \breve I \neq \emptyset$ for every $\mu'$. This is the group-theoretic analogy of the condition that the maximal Newton stratum intersects every maximal EKOR stratum.
\end{remark}

\begin{proof}
Similar to the proof of Proposition \ref{ma-EKOR}, the irreducible components of $M_K$ are the closures of $\breve K_\s \cdot \breve I t^{\mu'} \breve I$, where $\mu'$ runs over the elements in the $W_0$-orbit of $\mu$ with $t^{\mu'} \in {}^K \tW$.

(1)$\Rightarrow$(2): Note that $\breve K_\s \cdot \breve I t^{\mu'} \breve I$ is open in $M_K$. Thus if $[b]_{\max} \cap M_K$ is dense in $M_K$, then it intersects every subset $\breve K_\s \cdot \breve I t^{\mu'} \breve I$. As $[b]_{\max}$ is stable under the action of $\breve K_\s$, we have that $[b]_{\max} \cap \breve I t^{\mu'} \breve I \neq \emptyset$.

(2)$\Rightarrow$(1): Let $\mu' \in W_0(\mu)$ with $t^{\mu'} \in {}^K \tW$. Then $\breve K_\s \cdot \breve I \dot t^{\mu'} \breve I$ is irreducible. By \cite[Theorem 3.29]{He-CDM}, $M_K=\sqcup_{[b'] \in B(G, \{\mu\})} ([b'] \cap M_K)$. Therefore there exists $[b']$ such that $[b'] \cap \breve K_\s \cdot \breve I \dot t^{\mu'} \breve I$ is dense in $\breve K_\s \cdot \breve I \dot t^{\mu'} \breve I$. By assumption, $[b]_{\max} \cap \breve K_\s \cdot \breve I \dot t^{\mu'} \breve I \neq \emptyset$. Therefore the closure of $[b']$ contains an element in $[b]_{\max}$. As $[b]_{\max}$ is the unique maximal element in $B(G, \{\mu\})$, by \cite[Theorem 2.40]{He-CDM}, we have $[b']=[b]_{\max}$ and thus $[b]_{\max} \cap \breve K_\s \cdot \breve I \dot t^{\mu'} \breve I$ is dense in $\breve K_\s \cdot \breve I \dot t^{\mu'} \breve I$. Thus the intersection of $[b]_{\max}$ with any irreducible component $Y$ of $M_K$ is dense in $Y$. Hence $[b]_{\max} \cap M_K$ is dense in $M_K$.
\end{proof}

\subsection{} We conjecture that the maximal Newton stratum in $Sh_K$ is dense if and only if it intersects every maximal EKOR stratum. Proposition \ref{gp-den} provides an evidence to this conjecture. It seems that the conjecture would follow from Proposition \ref{gp-den} and some nice properties on the map $\Upsilon_K: Sh_K \to G(\breve \BQ_p)/\breve K_\s$. 

\end{document}